\bmdefine{\aaa}{a}
\bmdefine{\bbb}{b}
\bmdefine{\ccc}{c}
\bmdefine{\mmm}{m}
\bmdefine{\ppp}{p}
\bmdefine{\qqq}{q}
\bmdefine{\uuu}{u}
\bmdefine{\vvv}{v}
\bmdefine{\www}{w}
\bmdefine{\eee}{e}
\bmdefine{\xxx}{x}
\bmdefine{\zerovec}{0}
\bmdefine{\onevec}{1}
\newcommand{\CCC}{\mathbb{C}}
\newcommand{\RRR}{\mathbb{R}}
\newcommand{\FFF}{\mathbb{F}}
\newcommand{\define}{\mathrel{:=}}
\newcommand{\diag}{{\mathrm{Diag}}}
\newcommand{\rank}{\mathrm{rank\,}}
\newcommand{\maxrank}{{\mathrm{max.rank}}}
\newcommand{\supp}{{\mathrm{supp}}}
\newcommand{\cof}{{\mathrm{Cof}}}
\numberwithin{equation}{section}
\newtheorem{thm}[equation]{Theorem}
\newtheorem{lemma}[equation]{Lemma}
\newtheorem{definition}[equation]{Definition}
\newtheorem{prop}[equation]{Proposition}
\newcommand{\bigzerou}{\smash{\lower1.7ex\hbox{\bg 0}}}
\newcommand{\bigastu}{\smash{\lower1.7ex\hbox{\bg *}}}
\begin{document}

\title{About the maximal rank of $3$-tensors over the real and the complex number field}
\author{Toshio Sumi, Mitsuhiro Miyazaki and Toshio Sakata%
\thanks{Kyushu University, Kyoto University of Education and Kyushu University}
}
\maketitle

\section{Introduction}
High dimensional array data, that is, tensor data,  are becoming 
important recently in various application fields (for example see 
Miwakeichi et al.~\cite{Miwa}, Vasilescu and Terzopoulos~\cite{Vasil-Terzo} and 
Muti and Bourennane~\cite{Muti-Bo}).  
A $p$-tensor is an element of $\FFF^{n_1}\otimes\FFF^{n_2}\otimes\cdots\otimes\FFF^{n_p}$,
where $\FFF$ is the real or complex number field and $n_1$, $n_2$, \ldots, $n_p$ are
positive integers.
It is known that every $p$-tensor can be expressed as a sum of $p$-tensors of the form
$a_1\otimes a_2\otimes \cdots\otimes a_p$.
The rank of a tensor $x$ is, by definition, the smallest number such that $x$ is
expressed as  a sum of the tensors of the above form.
Since there is a canonical basis in $\FFF^{n_1}\otimes\cdots\otimes\FFF^{n_p}$,
there is a one to one correspondence between the set of all $p$-tensors and the set of
$p$-dimensional arrays of elements of $\FFF$.
In particular, $3$-tensor can be identified to 
$A=(A_{1};A_{2};\cdots;A_{n_3})$,
where each $A_i$ is an $n_1\times n_2$ matrix.
The rank of a tensor may be considered to express complexity of the tensor. 
The factorization of a tensor to a sum of rank $1$ tensors means that the data 
is expressed by a sum of data with  most simpler structure, and 
we may have better understanding of data. This is an essential attitude 
for data analysis and therefore the problem of tensor factorization 
is an essential one for applications.  
 For modelling data, the maximal rank of {\lq\lq}a set of tensors{\rq\rq} (model)  
is also crucially important, because an excessive rank model is redundant and deficient 
rank model can not describe data fully. In this paper we consider the maximal rank 
problem of $3$-tensors. In the following by $T(a,b,c)$ or simply 
$\FFF^{a\times b \times c}$ we denote the set of all tensors with 
size $a \times b \times c$, and by $\maxrank_\FFF(a,b,c)$ denotes the maximal rank of all tensors in $T(a,b,c)$.
Note that in this paper 
$\FFF$ is $\CCC$, the complex number field, or $\RRR$, the real number filed.
Atkinson and Stephens~\cite{Atkinson-Stephens} and 
Atkinson and Lloyd~\cite{Atkinson-Lloyd}  developed a non-linear theory based on their 
own several lemmas. Basically they estimated the bounds by adding two diagonal matrices 
which enables the two matrices diagonalizable simultaneously. They did not solve the 
problem fully, and restricted the type of tensors for obtaining clear cut results. 
They obtained 
$\maxrank_{\CCC}(p,n,n) \leq (p+1)n/2$ 
for an even $p$ and $[p/2]n$ under the condition that 
$f(\lambda_{1},\ldots,\lambda_{p})=\det(\sum_{i=1}^{p}\lambda_{i}A_{i})$
is as a polynomial in $\CCC[\lambda_{1},\ldots,\lambda_{p}]$  not identically zero and 
has no repeated polynomial factor.  However they treated the problem 
over the complex number field. 
The aim of this paper is to give upper bound over the real number field.  
We traced their method and tried to rephrase their result to the real number field.  
It should be noted that the problem becomes difficult for the real field because the 
characteristic polynomial of a matrix dose not necessarily have real roots.  
In this paper we will report some generalization of 
Atkinson and Stephens~\cite{Atkinson-Stephens} and 
Atkinson and Lloyd~\cite{Atkinson-Lloyd}. 
In Section~\ref{sec:OverReal} we first consider the real versions of 
several lemmas treated in the complex number field in the two papers, 
and by which we show two main theorems, Theorem~\ref{thm:maxrankAK1} and 
Theorem~\ref{thm:AL:rank@n.n.p},  
which are slight extensions of Theorem 1 in 
Atkinson and Stephens~\cite{Atkinson-Stephens} 
and Theorem 1 in Atkinson and Lloyd~\cite{Atkinson-Lloyd} respectively. 
In Section~\ref{sec:maxrank@n.n.3},  we will prove the statement without proof given by 
Atkinson and Stephens~\cite{Atkinson-Stephens}:
$\maxrank_{\CCC}(n,n,3) \leq 2n-1$ and $\maxrank_{\CCC}(n,n+1,3) \leq 2n$. 
And we will prove the real version of these under some mild condition.
See Theorems \ref{thm:cont sing} and \ref{thm:non-square}.
As an application of this result,
we will prove, for the relatively small size of tensors from $T(3,3,3)$ to $T(6,6,3)$
the upper bound are given. 
We also give an upper bound for a more general type of tensors in 
$T(n,m,3)$ in case $n<m$: 
$\maxrank_{\FFF}(n,m,3) \leq n+m-1$.
This improves the result of  Atkinson-Stephens 
(see Theorem~\ref{thm:non-square}). 


\section{Preliminaries}

We first recall some basic facts and set terminology.

\begin{notation}
\begin{enumerate}
\item
By $\FFF$, we express the real number field $\RRR$ or the complex number field $\CCC$.
\item
For a tensor $x\in\FFF^m\otimes\FFF^n\otimes\FFF^p$ with
$x=\sum_{ijk}a_{ijk}\eee_i\otimes\eee_j\otimes\eee_k$,
we identify $x$ with
$(A_1;\cdots;A_p)$,
where $A_k=(a_{ijk})_{1\leq i\leq m,1\leq j\leq n}$ for $k=1$, \ldots, $p$ 
is an $m\times n$ matrix,
and call
$(A_1;\cdots;A_p)$ a tensor.
\item
For an $m\times n\times p$ tensor $T=(A_1;\cdots;A_p)$,
$l\times m$ matrix $P$
and
$n\times k$ matrix $Q$,
we denote by $PTQ$ the $l\times k\times p$ tensor
$(PA_1Q;\cdots;PA_pQ)$. 
\item
For an $m\times n\times p$ tensor $T=(A_1;\cdots;A_p)$,
we denote by $T^T$ the $n\times m\times p$ tensor
$(A_1^T;\cdots;A_p^T)$. 
\item
For $p$ $m\times n$ matrices $A_1$, \ldots, $A_p$,
we denote by $(A_1,  \ldots, A_p)$ the $m\times np$ matrix obtained by aligning
$A_1$, \ldots, $A_p$ horizontally.
\item
For $m\times n$ matrices $A_1$, \ldots, $A_p$,
we denote by 
$\langle A_1,\ldots, A_p\rangle$
the vector subspace spanned by $A_1$, \ldots, $A_p$ 
in the $\FFF$-vector space of all the $m\times n$ matrices
with entries in $\FFF$.
\item
For an $m\times n$ matrix $M$,
we denote the $m\times j$ 
(resp.\ $m\times (n-j)$)
matrix consisting of the first $j$ 
(resp.\ last $n-j$)
columns of $M$
by $M_{\leq j}$
(resp.\ ${}_{j<}M$).
We denote the $i\times n$ 
(resp.\ $(m-i)\times n$)
matrix consisting of the first $i$ 
(resp.\ last $m-i$)
rows of $M$
by $M^{\leq i}$
(resp.\ ${}^{i<}M$).
For integers $i_1$, \ldots, $i_r$ and $j_1$, \ldots, $j_s$ with
$1\leq i_1<\cdots <i_r\leq m$ and 
$1\leq j_1<\cdots <j_s\leq n$,
we denote the $r\times s$ matrix consisting of 
$i_1$-th, $i_2$-th, \ldots, $i_r$-th rows  and 
$j_1$-th, $j_2$-th, \ldots, $j_s$-th columns of $M$
by
$M^{=\{i_1,\ldots, i_r\}}_{=\{j_1,\ldots,j_s\}}$.
\item
We denote by $E_{ij}$ the matrix unit whose entry in $(i,j)$ cell
is $1$ and $0$ otherwise.
%
%
%
%
\end{enumerate}
\end{notation}

\begin{definition}\rm
Let $x$ be an element of $\FFF^m\otimes\FFF^n\otimes\FFF^p$.
We define the rank of $x$, denoted by $\rank x$, to be
$\min\{r\mid\exists \aaa_i\in\FFF^m$, $\exists\bbb_i\in\FFF^n$, $\exists\ccc_i\in\FFF^p$ for 
$i=1$, \ldots, $r$ such that $x=\sum_{i=1}^r\aaa_i\otimes\bbb_i\otimes\ccc_i\}$.
$\max\{\rank x\mid x\in\FFF^m\otimes\FFF^n\otimes\FFF^p\}$ is denoted by
$\maxrank_\FFF(m,n,p)$.
\end{definition}
It is clear from the definition that 
$\rank(x+y)\leq \rank x+\rank y$ for any
$x$, $y\in\FFF^m\otimes\FFF^n\otimes\FFF^p$.

\begin{definition}\rm
For a matrix $A=(a_{ij})$ we set
$\supp(A)\define\{(i,j)\mid a_{ij}\neq0\}$
and call it the support of $A$.
\end{definition}

The following lemmas are easily verified.

\begin{lemma}\label{lem:rank basic1}
Let $(A_1;\cdots;A_p)$ be an $m\times n\times p$ tensor.
Then $\rank(A_1;\cdots;A_p)=\min\{r\mid\exists C_1$, \ldots, $C_r$
such that $C_i$ is a rank 1 matrix and 
$\langle A_1,\ldots, A_p\rangle
\subset
\langle C_1,\ldots, C_r\rangle\}$.
In particular,
\begin{enumerate}
\item
if
$\langle A_1,\ldots, A_p\rangle =\langle B_1,\ldots, B_q\rangle$,
then $\rank(A_1;\cdots;A_p)=\rank(B_1;\cdots;B_q)$,
\item
for any non-singular matrices $P$ and $Q$ of size $m$ and $n$ respectively,
$\rank(A_1;\cdots;A_p)=
\rank(PA_1Q;\cdots;PA_pQ)$ and
\item
$\rank(A_1^T;\cdots;A_p^T)=\rank(A_1;\cdots;A_p)$.
\end{enumerate}
\end{lemma}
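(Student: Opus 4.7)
The plan is to establish the main formula by a direct bijection between rank-$1$ decompositions of the tensor and spanning sets of rank-$1$ matrices, and then read off parts (1)--(3) as immediate corollaries.

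First I would unwind the identification between the tensor $(A_1;\cdots;A_p)$ and an element $x=\sum_{ijk}a_{ijk}\eee_i\otimes\eee_j\otimes\eee_k$. A rank-$1$ decomposition $x=\sum_{l=1}^{r}\aaa_l\otimes\bbb_l\otimes\ccc_l$ with $\ccc_l=(c_{l,1},\ldots,c_{l,p})^T$ corresponds slice-by-slice to
\[
A_k=\sum_{l=1}^{r}c_{l,k}\,\aaa_l\bbb_l^T\qquad(k=1,\ldots,p).
\]
Setting $C_l:=\aaa_l\bbb_l^T$, each $C_l$ is a rank-$1$ matrix and $\langle A_1,\ldots,A_p\rangle\subset\langle C_1,\ldots,C_r\rangle$. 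Conversely, given rank-$1$ matrices $C_1,\ldots,C_r$ with $\langle A_1,\ldots,A_p\rangle\subset\langle C_1,\ldots,C_r\rangle$, factor $C_l=\aaa_l\bbb_l^T$ and write $A_k=\sum_{l}c_{l,k}C_l$ for suitable scalars $c_{l,k}\in\FFF$. Setting $\ccc_l=(c_{l,1},\ldots,c_{l,p})^T$ and reversing the slice-by-slice computation recovers $x=\sum_l\aaa_l\otimes\bbb_l\otimes\ccc_l$, so $\rank x\le r$. Taking the minimum on both sides yields the main formula.

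Part (1) is immediate: the right-hand side of the main formula depends only on the subspace $\langle A_1,\ldots,A_p\rangle$. For part (2), I would use the observation that the map $C\mapsto PCQ$ is a linear isomorphism on $m\times n$ matrices that sends rank-$1$ matrices bijectively to rank-$1$ matrices (since $P$ and $Q$ are non-singular), and carries $\langle A_1,\ldots,A_p\rangle$ onto $\langle PA_1Q,\ldots,PA_pQ\rangle$; hence the minima in the main formula coincide. Part (3) is analogous using transposition, which likewise preserves ranks and carries spans to spans.

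The proof is essentially a packaging argument, so there is no real obstacle beyond bookkeeping; the only point that needs a moment's care is confirming that the scalars $c_{l,k}$ appearing in the reverse direction can be chosen to produce a valid factorization of $x$, which is guaranteed precisely by the containment $\langle A_1,\ldots,A_p\rangle\subset\langle C_1,\ldots,C_r\rangle$.
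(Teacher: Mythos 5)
Your proof is correct; the paper itself gives no argument for this lemma (it is listed under ``The following lemmas are easily verified''), and your slice-by-slice correspondence between rank-$1$ decompositions $x=\sum_l\aaa_l\otimes\bbb_l\otimes\ccc_l$ and spanning sets of rank-$1$ matrices $C_l=\aaa_l\bbb_l^T$ is precisely the standard verification the authors had in mind, with (1)--(3) following since the right-hand side depends only on the span and is preserved by the rank-$1$-preserving linear isomorphisms $C\mapsto PCQ$ and $C\mapsto C^T$. The only cosmetic point is that in the forward direction a term with $\aaa_l=\zerovec$ or $\bbb_l=\zerovec$ would give a zero (rank-$0$) matrix, but such terms never occur in a minimal decomposition, so the minimum is unaffected.
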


\begin{lemma}\label{lem:rank of tensor and matrix}
$\rank(A_1;\cdots;A_p)\geq\rank(A_1,\ldots,A_p)$.
\end{lemma}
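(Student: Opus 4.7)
The plan is to read off a low-rank matrix factorization of the horizontal concatenation $(A_1,\ldots,A_p)$ directly from any rank decomposition of the tensor. Set $r\define\rank(A_1;\cdots;A_p)$ and use Lemma~\ref{lem:rank basic1}: there exist rank $1$ matrices $C_1,\ldots,C_r$ with $\langle A_1,\ldots,A_p\rangle\subset\langle C_1,\ldots,C_r\rangle$. Write $C_i=\uuu_i\vvv_i^T$ with $\uuu_i\in\FFF^m$, $\vvv_i\in\FFF^n$, and expand
\[
A_k=\sum_{i=1}^r\alpha_{ik}C_i=\sum_{i=1}^r\alpha_{ik}\uuu_i\vvv_i^T
\qquad(k=1,\ldots,p)
\]
for suitable scalars $\alpha_{ik}\in\FFF$.

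Next, I would collect these $p$ identities side by side. Stacking the blocks gives
\[
(A_1,\ldots,A_p)=\sum_{i=1}^r\uuu_i\bigl(\alpha_{i1}\vvv_i^T,\ \alpha_{i2}\vvv_i^T,\ \ldots,\ \alpha_{ip}\vvv_i^T\bigr),
\]
which exhibits $(A_1,\ldots,A_p)$ as a sum of $r$ rank $1$ matrices (each summand is the outer product of $\uuu_i\in\FFF^m$ with the row vector in $\FFF^{np}$ obtained by concatenating $\alpha_{i1}\vvv_i^T,\ldots,\alpha_{ip}\vvv_i^T$). Hence $\rank(A_1,\ldots,A_p)\le r=\rank(A_1;\cdots;A_p)$, which is the desired inequality.

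I do not anticipate any real obstacle here: the entire argument rests on the fact that a rank $1$ term $\uuu_i\otimes\vvv_i\otimes\ccc_i$ in a tensor decomposition contributes, after unfolding along the third direction, exactly one rank $1$ summand $\uuu_i(\ccc_i\otimes\vvv_i)^T$ to the flattened matrix. The only point that needs a line of justification is the passage from the subspace statement in Lemma~\ref{lem:rank basic1} to the explicit expansion of each $A_k$, which is immediate by linear algebra.
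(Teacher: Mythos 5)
Your argument is correct: unfolding a length-$r$ spanning set of rank-one matrices into the horizontal concatenation $(A_1,\ldots,A_p)=\sum_{i=1}^r\uuu_i(\alpha_{i1}\vvv_i^T,\ldots,\alpha_{ip}\vvv_i^T)$ does exhibit the flattened matrix as a sum of $r$ matrices of rank at most one. The paper states this lemma without proof (as ``easily verified''), and your proof is exactly the standard verification it has in mind, so there is nothing to compare beyond noting that it checks out.
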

From now on, we denote $\rank_\RRR$ or $\rank_\CCC$
instead of $\rank$ to specify over which field,
$\RRR$ or $\CCC$, we are working.
For the statements common to both fields, we use $\rank_\FFF$. 

The following lemma is well known.

\begin{lemma}\label{lem:distinct root}
Let
$$
f(\lambda)=\lambda^n+a_1\lambda^{n-1}+\cdots+a_n
$$
be a monic polynomial with a variable $\lambda$ and coefficients in $\FFF$.
Suppose that $f(\lambda)=0$ has $n$ distinct roots in $\FFF$.
Then there is a neighbourhood $U$ of $\aaa=(a_1,a_2,\ldots,a_n)^T$
in $\FFF^n$ such that for any $\xxx=(x_1,x_2,\ldots,x_n)^T\in U$,
$$
\lambda^n+x_1\lambda^{n-1}+\cdots+x_n=0
$$
has $n$ distinct roots in $\FFF$ and these roots are continuous function of $\xxx$.
\end{lemma}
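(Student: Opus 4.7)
The plan is to apply the inverse function theorem to the map $\sigma:\FFF^n\to\FFF^n$ that sends a tuple of roots to the coefficient vector of the corresponding monic polynomial. Writing the $n$ distinct roots of $f$ as $\alpha_1,\ldots,\alpha_n\in\FFF$, define $\sigma$ by
$$\prod_{i=1}^n(\lambda-y_i)=\lambda^n+\sigma_1(y)\lambda^{n-1}+\cdots+\sigma_n(y),$$
so that $\sigma(\alpha_1,\ldots,\alpha_n)=\aaa$. It suffices to exhibit a continuous local inverse of $\sigma$ near $\aaa$ whose image consists of tuples with pairwise distinct components; composing with $\sigma^{-1}$ then produces continuous root functions lying in $\FFF$.

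To check the hypothesis of the inverse function theorem, I would compute the Jacobian of $\sigma$ at $\alpha=(\alpha_1,\ldots,\alpha_n)$. Differentiating the defining identity with respect to $y_j$ yields
$$-\prod_{i\neq j}(\lambda-y_i)=\sum_{k=1}^n\frac{\partial\sigma_k}{\partial y_j}\lambda^{n-k},$$
so at $y=\alpha$ the $j$-th column of the Jacobian matrix records (up to sign) the coefficients of the polynomial $q_j(\lambda):=\prod_{i\neq j}(\lambda-\alpha_i)$. Because $q_j(\alpha_\ell)=\delta_{j\ell}\prod_{i\neq j}(\alpha_j-\alpha_i)\neq 0$ by distinctness, the polynomials $q_1,\ldots,q_n$ are linearly independent in the $n$-dimensional space of polynomials of degree less than $n$; hence the Jacobian determinant is nonzero. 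Applying the inverse function theorem over $\FFF$ (the map $\sigma$ is a polynomial, hence real- or complex-analytic) provides open neighbourhoods $W\ni\alpha$ in $\FFF^n$ and $U\ni\aaa$ in $\FFF^n$ such that $\sigma:W\to U$ is a homeomorphism with continuous inverse.

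By shrinking $W$ to a product of pairwise disjoint open balls centred at the $\alpha_i$ (and replacing $U$ by $\sigma(W)$), we guarantee that for every $\xxx\in U$ the components of $\sigma^{-1}(\xxx)$ remain pairwise distinct elements of $\FFF$, and hence are the $n$ distinct roots of $\lambda^n+x_1\lambda^{n-1}+\cdots+x_n$; continuity in $\xxx$ is inherited from $\sigma^{-1}$. The main subtlety lies in the real case: the inverse function theorem yields preimages in $\FFF^n$, not merely in the algebraic closure, so it is precisely here that the hypothesis ``$n$ distinct roots in $\FFF$''---rather than in $\CCC$---is used to ensure that small perturbations of the coefficients still produce $n$ real roots when $\FFF=\RRR$.
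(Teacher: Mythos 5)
The paper gives no proof of this lemma at all---it is simply asserted as ``well known''---so there is nothing to compare your argument against; what matters is whether your proof stands on its own, and it does. The application of the inverse function theorem to the root-to-coefficient map $\sigma$ is the standard way to make the ``continuous dependence of simple roots'' statement precise, and your verification that the Jacobian at $(\alpha_1,\ldots,\alpha_n)$ is nonsingular is correct: the columns are (up to sign) the coefficient vectors of the Lagrange-type polynomials $q_j(\lambda)=\prod_{i\neq j}(\lambda-\alpha_i)$, and the evaluation argument $q_j(\alpha_\ell)=\delta_{j\ell}\prod_{i\neq j}(\alpha_j-\alpha_i)\neq 0$ shows these are linearly independent in the $n$-dimensional space of polynomials of degree at most $n-1$. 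Shrinking the domain to a product of disjoint balls to preserve distinctness of the components of $\sigma^{-1}(\xxx)$, and observing that $\sigma(\sigma^{-1}(\xxx))=\xxx$ forces those components to be exactly the roots of the perturbed polynomial, closes the argument. Your closing remark is also apt: in the real case the theorem genuinely uses the hypothesis that the roots lie in $\RRR$ (and are simple), since the inverse function theorem is what guarantees the perturbed roots stay in $\RRR^n$ rather than merely in $\CCC^n$; this is precisely the point the authors lean on when they invoke the lemma over $\FFF=\RRR$ in Lemma \ref{lem:as lemma3}.
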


\section{Maximal rank over the real number field\label{sec:OverReal}}
In this section we show results in the real number field which are 
obtained by Atkinson and Stephens~\cite{Atkinson-Stephens} and Atkinson and Lloyd~\cite{Atkinson-Lloyd} in the complex number field.  
We show the several results which is along with the results given by them, 
but the results are slightly different and some of them are new one.  
Now we prepare several lemmas which is a real version of Lemma in 
Atkinson-Stephens \cite{Atkinson-Stephens}. 
First we show the extended version of Lemma~3 in \cite{Atkinson-Stephens}.
%
%

\begin{lemma} \label{lem:as lemma3}
Let $A=(a_{ij})$ and $B=(b_{ij})$ be 
$n\times n$ matrices 
with entries in $\FFF$.
Then
there exist diagonal matrices  $X$, $Y$ 
with entries in $\FFF$
satisfying the followings.
\begin{enumerate}
\item \label{item:as lemma3 1}
$A+X$ is non-singular.
\item \label{item:as lemma3 2}
$(A+X)^{-1}(B+Y)$ has 
$n$ distinct eigenvalues in $\FFF$.
\end{enumerate}
Moreover 
if $i_1$, \ldots, $i_r$ are integers with
$1\leq i_1<\cdots<i_r\leq n$,
$
A_{=\{i_1,\ldots,i_r\}}^{=\{i_1,\ldots,i_r\}}
$
is non-singular and
$
(A_{=\{i_1,\ldots,i_r\}}^{=\{i_1,\ldots,i_r\}})^{-1}
(B_{=\{i_1,\ldots,i_r\}}^{=\{i_1,\ldots,i_r\}})
$
has $r$ distinct eigenvalues in $\FFF$,
then we can take $X$ and $Y$ so that the entries of the
$(i_u,i_u)$ cell of $X$ and $Y$ are zero for $u=1$, \ldots, $r$.
In particular,
\begin{itemize}
\item[(a)]
if
$(n,n)\in\supp(A)$,
then we can take $X$ and $Y$ so that the entries of the $(n,n)$ cell of $X$ and $Y$
are $0$.
\item[(b)]
if
$\{(n-1,n), (n,n-1)\}\subset\supp(A)$,
$(n,n)\not\in\supp(A)\cup\supp(B)$
 and $b_{n-1,n}/a_{n-1,n}\neq b_{n,n-1}/a_{n,n-1}$,
then we can take $X$ and $Y$ so that the entries of the $(n-1,n-1)$ and $(n,n)$ cells of $X$ and $Y$
are $0$.
\end{itemize}
\end{lemma}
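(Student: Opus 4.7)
The plan is to establish the full \emph{moreover} clause by induction on $n - r$; the base case $r = n$ is immediate (take $X = Y = 0$), and the unqualified existence statement in the lemma is recovered at $r = 0$, where the hypothesis is vacuous and no zero constraints are imposed on $X, Y$. A simultaneous row-and-column permutation preserves diagonality of $X, Y$ together with the hypothesis, so we may assume $\{i_1,\ldots,i_r\} = \{1,\ldots,r\}$: the first $r$ diagonal entries of $X, Y$ are then pinned at $0$, and it suffices to perform a single extension from a good $r \times r$ top-left block to a good $(r+1) \times (r+1)$ top-left block and iterate.

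For this extension, let
$$
U := \begin{pmatrix} A' + \tilde X & \uuu \\ \vvv^T & a_{r+1,r+1} + x_{r+1} \end{pmatrix}, \qquad
V := \begin{pmatrix} B' + \tilde Y & \ppp \\ \qqq^T & b_{r+1,r+1} + y_{r+1} \end{pmatrix}
$$
denote the $(r+1) \times (r+1)$ top-left blocks, where $A' + \tilde X$ is the good $r \times r$ block. The Schur-complement identity gives
$$
P(\lambda) := \det(\lambda U - V) = \bigl(\lambda(a_{r+1,r+1}+x_{r+1}) - (b_{r+1,r+1}+y_{r+1})\bigr)\, Q(\lambda) + R(\lambda),
$$
where $Q(\lambda) := \det(\lambda(A' + \tilde X) - (B' + \tilde Y))$ has degree $r$ with $r$ distinct roots $\mu_1, \ldots, \mu_r \in \FFF$ (the inductively known eigenvalues) and $R(\lambda)$ is a polynomial independent of $x_{r+1}, y_{r+1}$.

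Choose $\mu_0 \in \FFF$ with $\mu_0 \notin \{\mu_1, \ldots, \mu_r\}$, set $t := a_{r+1,r+1} + x_{r+1}$, and pick $y_{r+1}$ so that $b_{r+1,r+1} + y_{r+1} = t\mu_0$. Then $P(\lambda) = t(\lambda - \mu_0) Q(\lambda) + R(\lambda)$; dividing by $t \det(A' + \tilde X)$ and then by the resulting leading coefficient in $\lambda$ (which tends to $1$ as $t \to \infty$) yields a monic polynomial of degree $r + 1$ whose coefficients converge, as $t \to \infty$, to those of $(\lambda - \mu_0)\prod_{i=1}^r(\lambda - \mu_i)$, a polynomial with $r + 1$ distinct roots in $\FFF$. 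Lemma \ref{lem:distinct root} supplies a threshold beyond which the normalized pencil itself has $r + 1$ distinct roots in $\FFF$, and the same large-$t$ choice forces $\det U = t \det(A' + \tilde X) + O(1)$ to be nonzero. The main obstacle throughout is keeping eigenvalues in $\FFF$ — over $\RRR$ this is a genuine constraint, not a generic one — and Lemma \ref{lem:distinct root} is exactly what propagates realness under the uncontrolled perturbation $R(\lambda)/t$, which is small for large $t$.

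The special consequences follow by invoking the moreover clause with small good submatrices. For (a), the $1 \times 1$ submatrix is $(a_{nn}) \neq 0$ with unique eigenvalue $b_{nn}/a_{nn} \in \FFF$. For (b), a direct $2 \times 2$ calculation gives
$$
(A_{=\{n-1,n\}}^{=\{n-1,n\}})^{-1} B_{=\{n-1,n\}}^{=\{n-1,n\}} = \frac{1}{a_{n-1,n}\,a_{n,n-1}} \begin{pmatrix} a_{n-1,n}\,b_{n,n-1} & 0 \\ \ast & a_{n,n-1}\,b_{n-1,n} \end{pmatrix},
$$
which is lower-triangular with diagonal entries $b_{n,n-1}/a_{n,n-1}$ and $b_{n-1,n}/a_{n-1,n}$ in $\FFF$, distinct by hypothesis; so the moreover clause applies with $r = 2$ and $\{i_1, i_2\} = \{n-1, n\}$.
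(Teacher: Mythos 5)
Your proof is correct, and it reaches the same conclusion by a genuinely different organization of the same underlying tool. Both arguments ultimately rest on Lemma \ref{lem:distinct root} (persistence of $n$ simple roots in $\FFF$ under small perturbation of the coefficients of a monic polynomial), but the paper proves the moreover clause in one shot: after permuting so that $\{i_1,\ldots,i_r\}=\{1,\ldots,r\}$, it rescales the off-diagonal blocks of $A$ and $B$ by $\epsilon$ and the lower-right blocks by $\epsilon^2$, adds $D_1=E_{n-r}$ and $D_2=\diag(s_{r+1},\ldots,s_n)$ with fresh distinct $s_{r+1},\ldots,s_n$, notes that at $\epsilon=0$ the pencil is block diagonal with $n$ distinct eigenvalues, invokes root continuity for small $\epsilon>0$, and then pulls the $\epsilon$'s out of the determinant to read off $X=\epsilon^{-2}\diag(O,D_1)$, $Y=\epsilon^{-2}\diag(O,D_2)$ (the unqualified statement is the analogous computation around the pencil $(E_n,D)$). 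You instead induct on $n-r$, bordering the good block one row and column at a time via the cofactor expansion $\det(\lambda U-V)=(\lambda t-t\mu_0)Q(\lambda)+R(\lambda)$ and sending the single new diagonal entry $t\to\infty$; the two limits are essentially dual ($\epsilon\to 0$ on the old entries versus $t\to\infty$ on the new diagonal entry), and your control of the leading coefficient $\det U=t\det(A'+\tilde X)+c$ with $c$ independent of $t$ is the point that makes the normalization legitimate. Your route trades the paper's $\epsilon$-versus-$\epsilon^2$ bookkeeping and determinant-rescaling identity for an induction whose hypothesis must be (and implicitly is) strengthened to allow the good block to be a block of $A+X_0$ for a partial diagonal $X_0$ rather than of $A$ itself --- worth stating explicitly, but harmless. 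A further merit of your write-up is that it carries out the $1\times1$ and $2\times2$ verifications behind the special cases (a) and (b), which the paper asserts without computation; your lower-triangular form for $(A_{=\{n-1,n\}}^{=\{n-1,n\}})^{-1}B_{=\{n-1,n\}}^{=\{n-1,n\}}$ with diagonal entries $b_{n,n-1}/a_{n,n-1}$ and $b_{n-1,n}/a_{n-1,n}$ is exactly what is needed.
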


\begin{proof}
First we prove the former half of the lemma.
Take distinct elements $s_1,\ldots, s_n$ of $\FFF$ and
set $D=\diag(s_1,\ldots,s_n)$.
Note that 
if 
the absolute values of
all entries of $A'$ 
are sufficiently small,
then
$A'+E_n$ is non-singular 
 and all entries of $(A'+E_n)^{-1}$ 
are continuous with respect to entries of $A'$.
Thus $(A'+E_n)^{-1}(B'+ D)$ is a continuous function with respect to
$A'$ and $B'$ if 
the absolute values of
their entries are sufficiently small.
Since 
$$\det(\lambda E_n-(A'+E_n)^{-1}(B'+ D))=0$$
has $n$ distinct 
roots $s_1$, $s_2$, \ldots, $s_n$ in $\FFF$
if $A'=B'=O$,
we see by Lemma \ref{lem:distinct root} that
there is a neighbourhood of $O$ 
in $\FFF^{n^2}$
such that if $A'$ and $B'$ are both in
it, then 
$$\det(\lambda E_n-(A'+E_n)^{-1}(B'+ D))=0$$
has $n$ distinct roots
in $\FFF$.
Hence for sufficiently small $\epsilon>0$, 
$$\det(\lambda E_n-(\epsilon A+E_n)^{-1}(\epsilon B+ D))=0$$
has $n$ distinct roots
in $\FFF$
and therefore
$$\det(\lambda (A+(1/\epsilon)E_n)-(B+(1/\epsilon)D))=0$$
has $n$ distinct roots
in $\FFF$.
So it is enough to set $X=(1/\epsilon)E_n$ and 
$Y=(1/\epsilon)D$.

Next we prove the latter half of the lemma.
By permuting the rows and columns simultaneously,
we may assume that $i_1=1$, \ldots, $i_r=r$.
Set
$$
A=
\begin{pmatrix}A_{11}&A_{12}\\A_{21}&A_{22}\end{pmatrix},
\quad
B=
\begin{pmatrix}B_{11}&B_{12}\\B_{21}&B_{22}\end{pmatrix},
$$
where $A_{11}$ and $B_{11}$ are $r\times r$ matrices.
Then, by assumption,
$A_{11}$ is non-singular and 
$(A_{11})^{-1}B_{11}$ 
has $r$ distinct eigenvalues,
say $s_1$, \ldots, $s_r$,
 in $\FFF$.
We take $n-r$ distinct elements 
$s_{r+1}$, \ldots, $s_n$ from $\FFF\setminus\{s_1$, \ldots, $s_r\}$
and set
$$
D_1=E_{n-r},
\quad
D_2=\diag(s_{r+1}, \ldots, s_{n}).
$$
Then by the same argument as in the proof of the former half,
we see that
$
\begin{pmatrix}
A_{11}&\epsilon A_{12}\\
\epsilon A_{21}&\epsilon^2A_{22}+D_1
\end{pmatrix}
$
is non-singular and
$$
\det\left(
\lambda E_n-
\begin{pmatrix}A_{11}&\epsilon A_{12}\\\epsilon A_{21}&\epsilon^2A_{22}+D_1\end{pmatrix}^{-1}
\begin{pmatrix}B_{11}&\epsilon B_{12}\\\epsilon B_{21}&\epsilon^2B_{22}+D_2\end{pmatrix}
\right)=0
$$
has $m$ distinct roots for sufficiently small $\epsilon>0$.
Therefore
$$
\det\left(
\lambda 
\begin{pmatrix}A_{11}&\epsilon A_{12}\\\epsilon A_{21}&\epsilon^2A_{22}+D_1\end{pmatrix}
-
\begin{pmatrix}B_{11}&\epsilon B_{12}\\\epsilon B_{21}&\epsilon^2B_{22}+D_2\end{pmatrix}
\right)=0
$$
has $m$ distinct roots for sufficiently small $\epsilon>0$.
Since
\begin{eqnarray*}
&&
\det\left(
\lambda 
\begin{pmatrix}A_{11}&A_{12}\\ A_{21}&A_{22}+\epsilon^{-2}D_1\end{pmatrix}
-
\begin{pmatrix}B_{11}&B_{12}\\ B_{21}&B_{22}+\epsilon^{-2}D_2\end{pmatrix}
\right)\\
&=&
\epsilon^{-2(n-r)}
\det\left(
\lambda 
\begin{pmatrix}A_{11}&\epsilon A_{12}\\\epsilon A_{21}&\epsilon^2A_{22}+D_1\end{pmatrix}
-
\begin{pmatrix}B_{11}&\epsilon B_{12}\\\epsilon B_{21}&\epsilon^2B_{22}+D_1\end{pmatrix}
\right),
\end{eqnarray*}
we see that it is enough to set
$X=\epsilon^{-2}\diag(O,D_1)$
and
$Y=\epsilon^{-2}\diag(O,D_2)$.
\end{proof}

The following result is well-known but we write a proof in convenience.

\begin{prop} \label{rank@a.b.ab}
If $n\geq ab$, it holds
$$\maxrank_{\FFF}(a,b,n)=ab.$$
\end{prop}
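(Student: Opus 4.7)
The plan is to prove both inequalities separately using Lemma~\ref{lem:rank basic1} and the observation that the space of $a\times b$ matrices has dimension $ab$ and admits a basis consisting of rank-$1$ matrices (the matrix units $E_{ij}$).

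For the upper bound $\maxrank_\FFF(a,b,n)\le ab$, I would take an arbitrary tensor $(A_1;\ldots;A_n)$ in $T(a,b,n)$ and observe that
$$\langle A_1,\ldots,A_n\rangle\subset\langle E_{ij}\mid 1\le i\le a,\ 1\le j\le b\rangle,$$
where the right-hand side is spanned by $ab$ rank-$1$ matrices. Applying Lemma~\ref{lem:rank basic1}, which expresses the rank of a tensor as the minimum number of rank-$1$ matrices needed in a spanning set of such form, I conclude $\rank(A_1;\ldots;A_n)\le ab$.

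For the lower bound $\maxrank_\FFF(a,b,n)\ge ab$, I would exhibit a single tensor attaining rank $ab$. Using the hypothesis $n\ge ab$, define $(A_1;\ldots;A_n)$ by taking the first $ab$ matrices to be, in any order, all the matrix units $E_{ij}$ with $1\le i\le a$, $1\le j\le b$, and setting the remaining $A_k=0$. Then $\langle A_1,\ldots,A_n\rangle$ equals the entire $ab$-dimensional space of $a\times b$ matrices. If $C_1,\ldots,C_r$ are any rank-$1$ matrices with $\langle A_1,\ldots,A_n\rangle\subset\langle C_1,\ldots,C_r\rangle$, then $\langle C_1,\ldots,C_r\rangle$ must contain a space of dimension $ab$, forcing $r\ge ab$. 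By Lemma~\ref{lem:rank basic1} this yields $\rank(A_1;\ldots;A_n)\ge ab$.

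Combining the two bounds gives $\maxrank_\FFF(a,b,n)=ab$. There is no real obstacle here: the argument is essentially a dimension count, and both directions follow immediately from the characterization of tensor rank in Lemma~\ref{lem:rank basic1}. The only subtlety worth stating explicitly is that the lower bound requires the matrix units themselves to be rank-$1$, which is why the dimension of the span translates directly into a lower bound for the tensor rank.
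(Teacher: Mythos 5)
Your proposal is correct and takes essentially the same approach as the paper: your upper bound is the same decomposition into $ab$ rank-one terms indexed by $(i,j)$ that the paper writes out explicitly, and your lower bound is the same dimension count. The only cosmetic difference is that the paper obtains the lower bound by permuting modes and applying Lemma~\ref{lem:rank of tensor and matrix} to an unfolding of full rank $ab$, whereas you read it off directly from the span characterization in Lemma~\ref{lem:rank basic1}; these compute the same quantity.
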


\begin{proof}
It is clear from the definition that
$\maxrank_\FFF(a,b,n)=\maxrank_\FFF(n,a,b)$.
If $A=(A_1;A_2;\cdots;A_b)$ is an $n\times a\times b$ tensor,
then it is also clear from the definition that
$\rank_\FFF A\geq \rank_\FFF (A_1,A_2,\ldots, A_b)$.
So we see that $\maxrank_\FFF(n,a,b)\geq ab$.

Next, let $A=(a_{ijk})$ be an arbitrary $3$-tensor.
Then
$$
A=\sum_{i=1}^a\sum_{j=1}^b\eee_i\otimes\eee_j\otimes(a_{ij1},a_{ij2},\ldots,a_{ijn})^T.
$$
Therefore, $\rank_\FFF A\leq ab$.
\end{proof}

We can show the real case of
Lemma~4 in \cite{Atkinson-Stephens}.

\begin{lemma}[cf. Lemma~4 \cite{Atkinson-Stephens}] \label{lem:AS:lemma4}
Let $X$ and  $Y$ be an $n \times n$ matrix such that  $X$ is non-singular 
and each root of $\det(\lambda X-Y)=0$ is in $\FFF$ and not repeated.
Then for any $n\times (m-n)$ matrices $U$ and $V$, it holds that  
$$\rank_{\FFF}(X,U;Y,V)\leq m.$$
\end{lemma}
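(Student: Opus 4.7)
The plan is to simultaneously diagonalize the pencil $(X,Y)$ on the left, clean out $U$ on the right, and then read off a decomposition column by column.

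First, I will translate the hypothesis into a diagonalization statement: since $X$ is non-singular, the roots of $\det(\lambda X - Y)=0$ are exactly the eigenvalues of $X^{-1}Y$, and these being $n$ distinct elements of $\FFF$ means that there is a non-singular $P\in\FFF^{n\times n}$ with $X^{-1}Y = P D P^{-1}$, where $D = \diag(\lambda_1,\ldots,\lambda_n)$. Next, I will invoke Lemma~\ref{lem:rank basic1}(2) with the left multiplier $P^{-1}X^{-1}$ and the right multiplier $\diag(P, I_{m-n})$ to rewrite
$$
\rank_\FFF(X,U;Y,V) = \rank_\FFF(I_n, U'; D, V'),
$$
where $U' = P^{-1}X^{-1}U$ and $V' = P^{-1}X^{-1}V$. (The key computation is $P^{-1}X^{-1}YP = P^{-1}(PD) = D$.)

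Second, I will use one more invariance, right-multiplying by the non-singular matrix
$$
\begin{pmatrix} I_n & -U' \\ 0 & I_{m-n} \end{pmatrix},
$$
which replaces the block $(I_n, U')$ by $(I_n, 0)$ and the block $(D, V')$ by $(D, W)$, where $W := V' - DU'$. So the task reduces to showing $\rank_\FFF(I_n, 0; D, W) \le m$.

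Finally, I will exhibit an explicit decomposition with $m$ rank-one summands by slicing column by column. For $j = 1,\ldots,n$, the $j$-th column of the tensor is the $n\times 2$ matrix $(\eee_j,\lambda_j\eee_j) = \eee_j(1,\lambda_j)$, which has rank one and gives the rank-one tensor $\eee_j \otimes \eee_j \otimes (1,\lambda_j)^T$. For $j = n+1,\ldots,m$, the $j$-th column slab is $(0, w_{j-n}) = w_{j-n}(0,1)$, giving the rank-one tensor $w_{j-n}\otimes\eee_j\otimes(0,1)^T$. Adding up these $m$ rank-one tensors reconstructs $(I_n,0;D,W)$, yielding $\rank_\FFF \le m$.

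I do not anticipate a significant obstacle: once the simultaneous diagonalization has been used to normalize $X$ to $I_n$ and $Y$ to $D$, the $U,V$ part is absorbed by a single elementary column operation, and the final column-by-column rank-one decomposition is mechanical. The only care needed is the bookkeeping of the left and right multiplications and citing Lemma~\ref{lem:rank basic1}(2) at each invariance step.
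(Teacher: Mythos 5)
Your proof is correct, and it follows the same route the paper intends: the paper's own ``proof'' is just a pointer to Lemma~4 of Atkinson--Stephens, whose argument is precisely this simultaneous diagonalization of the pencil $(X,Y)$ followed by a column-by-column rank-one decomposition. You have in effect supplied the details the paper omits, and the bookkeeping (the left factor $P^{-1}X^{-1}$, the block column operation killing $U'$, and the $m$ explicit rank-one summands) all checks out.
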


\begin{proof}
We can apply the proof of Lemma~4 \cite{Atkinson-Stephens}.
\end{proof}

The following theorem is a slight generalization of Theorem~1 
in \cite{Atkinson-Stephens}.

\begin{thm} \label{thm:maxrankAK1}
Let $n\leq m$ and $\FFF=\RRR, \CCC$.
\begin{enumerate}
\item if $p$ is odd, it holds
$\displaystyle{ \maxrank_{\FFF}(n,m,p)\leq n+\frac{m(p-1)}{2}}$.
\item if $p$ is even, it holds
$\displaystyle{\maxrank_{\FFF}(n,m,p)\leq 2n+\frac{m(p-2)}{2}}$ and
in addition if $m=n$, it holds
$\displaystyle{\maxrank_{\FFF}(n,n,p)\leq \frac{n(p+2)}{2}-1}$.
\end{enumerate}
\end{thm}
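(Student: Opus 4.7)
The plan is to decompose $T=(A_1;A_2;\ldots;A_p)$ as the sum of an ``anchor-plus-corrections'' tensor $C$ of rank at most $n$, whose slices simultaneously realize the anchor $A_1$ and all diagonal corrections needed for the remaining paired blocks, plus a residual tensor that splits cleanly into $(p-1)/2$ (or $(p-2)/2$) independent pairs each bounded by Lemma~\ref{lem:AS:lemma4}. Using Lemma~\ref{lem:rank basic1} we may change bases in $\FFF^n,\FFF^m$ and in $\langle A_1,\ldots,A_p\rangle$, so (in the main case) we reduce to $A_1=(I_n,0_{n\times(m-n)})$; the degenerate case in which the span admits no row-rank-$n$ matrix is treated separately by restricting to the common row space and inducting on a smaller first dimension.

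For the odd $p$ bound, pair the remaining slices as $(A_2,A_3),\ldots,(A_{p-1},A_p)$, and for each pair apply Lemma~\ref{lem:as lemma3} to the first $n\times n$ blocks $(A_{2i})_{\leq n}$ and $(A_{2i+1})_{\leq n}$ to produce diagonal matrices $D_{2i},D_{2i+1}$ making $(A_{2i})_{\leq n}-D_{2i}$ non-singular and $((A_{2i})_{\leq n}-D_{2i})^{-1}((A_{2i+1})_{\leq n}-D_{2i+1})$ have $n$ distinct eigenvalues in $\FFF$. Define
\[
C=\sum_{j=1}^n \eee_j\otimes\eee_j\otimes c_j,\qquad c_j=\eee_1+\sum_{k=2}^p d_{k,j}\eee_k\in\FFF^p,
\]
where $d_{k,j}$ denotes the $j$-th diagonal entry of $D_k$. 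A direct computation shows that the $k$-th slice of $C$ equals $A_1$ for $k=1$ and $(D_k,0)$ for $k\ge 2$, so $T-C=(0;\tilde A_2;\ldots;\tilde A_p)$ with $\tilde A_k=A_k-(D_k,0)$; each pair $(\tilde A_{2i};\tilde A_{2i+1})$ satisfies the hypotheses of Lemma~\ref{lem:AS:lemma4} and contributes rank $\le m$. Combined with $\rank C\le n$, this yields $\rank T\le n+m(p-1)/2$.

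The general even $p$ case is parallel, with pairs $(A_3,A_4),\ldots,(A_{p-1},A_p)$ and a second anchor $A_2$ contributing an extra rank $\le n$ as a matrix, yielding $2n+m(p-2)/2$. For the improved bound when $m=n$, instead pair all $p$ slices as $(A_1,A_2),(A_3,A_4),\ldots,(A_{p-1},A_p)$ and, by a further generic change of basis (achievable off a proper subvariety of $\GL_n\times \GL_n$), arrange that the $(n,n)$-entry of every odd-indexed slice $A_{2i-1}$ is nonzero. Lemma~\ref{lem:as lemma3}(a) then applies to each pair, giving corrections $D_{2i-1},D_{2i}$ whose $(n,n)$-entries vanish, so the combined correction tensor has $c_n=0$ and thus rank at most $n-1$; together with the $p/2$ paired blocks of rank $\le n$ (Lemma~\ref{lem:AS:lemma4} with $m=n$), this gives $\tfrac{p}{2}n+(n-1)=\tfrac{n(p+2)}{2}-1$.

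The main obstacle is the normal-form bookkeeping: handling tensors whose span contains no row-rank-$n$ element (for the reduction to $A_1=(I_n,0)$), and, in the square case, verifying that a single change of basis can simultaneously force every odd-indexed slice $A_{2i-1}$ to have nonzero $(n,n)$-entry while preserving the structure exploited by Lemma~\ref{lem:as lemma3}(a).
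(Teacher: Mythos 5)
Your main construction coincides with the paper's: normalize one slice to diagonal form, fold that slice together with all the diagonal corrections $D_k$ into a single tensor $\sum_j\eee_j\otimes\eee_j\otimes c_j$ of rank at most $n$, and bound each remaining pair by $m$ via Lemma~\ref{lem:AS:lemma4} after applying Lemma~\ref{lem:as lemma3}; the counting in all three bounds is right. The one genuine flaw is your treatment of the case where $\langle A_1,\ldots,A_p\rangle$ contains no matrix of rank $n$: you propose to pass to a ``common row space'' and induct on a smaller first dimension, but a space of $n\times m$ matrices all of rank $<n$ need not admit any common row space or common column space of dimension $<n$ (for $n=m=p=3$ take the span of the $3\times3$ antisymmetric matrices: every element has rank at most $2$, yet the images of $E_{12}-E_{21}$, $E_{13}-E_{31}$, $E_{23}-E_{32}$ have no common proper containing subspace, and likewise for kernels), so the reduction you invoke does not exist and that branch fails as written. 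The gap is localized and easily closed, and the paper's proof shows how: it never demands a full-rank anchor. It normalizes $A_p$ to $\diag(E_r,O)$ with $r\le n$ arbitrary and takes $D_p=B_p$; in your notation this amounts to replacing $c_j=\eee_1+\sum_{k\ge2}d_{k,j}\eee_k$ by $c_j=\delta_j\eee_1+\sum_{k\ge2}d_{k,j}\eee_k$, where $\delta_j\in\{0,1\}$ is the $j$-th diagonal entry of the normalized anchor. This still gives $\rank C\le n$ and makes the degenerate case disappear entirely.

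For the improved bound when $p$ is even and $m=n$ you take a genuinely different route: you pair all $p$ slices, use a generic $(P,Q)$ to make every $(n,n)$-entry of the odd-indexed slices nonzero so that part (a) of Lemma~\ref{lem:as lemma3} forces all corrections to vanish in the $(n,n)$ cell, and conclude that the correction tensor $\sum_{j=1}^{n-1}\eee_j\otimes\eee_j\otimes c_j$ has rank at most $n-1$. The paper instead keeps the anchor, pairs only $B_1,\ldots,B_{p-2}$, and chooses the single leftover correction $D_{p-1}$ so that $B_{p-1}-D_{p-1}$ is singular. Both yield $n(p+2)/2-1$; your version requires the (harmless but unstated) remark that a pair with zero odd slice is handled trivially, while the paper's requires only one singular-diagonal adjustment rather than a simultaneous genericity claim over all pairs.
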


\begin{proof}
Let $A=(A_1;\ldots;A_p) \in \FFF^{n\times m\times p}$.
There is non-singular matrices $P$ and $Q$ and integer $r\leq n$ such that
$PA_pQ=\begin{pmatrix} E_r&0\cr 0&0\end{pmatrix}$.
Then letting $B_j=PA_jQ$ for each $j=1,\ldots,p$, we have
$$\rank_{\FFF}(A_1;\cdots;A_{p})=\rank_{\FFF}(B_1;\cdots;B_p).$$
Let $D_p=B_p$ and $D_j=(D'_j,O)$ be $n\times m$ matrices with diagonal 
matrices $D'_j$ for $1\leq j<p$
such that $(B_{2i-1})_{\leq n}-D'_{2i-1}$ and
$(B_{2i})_{\leq n}-D'_{2i}$ 
satisfy the conditions of \ref{item:as lemma3 1} and \ref{item:as lemma3 2}
of Lemma \ref{lem:as lemma3}
for $i=1$, \ldots, $\lfloor(p-1)/2\rfloor$.
Then it holds 
$$\rank_{\FFF}(A) \leq 
  \rank_{\FFF}(D_1;\cdots;D_{p})
     +\rank_{\FFF}(B_1-D_1;\cdots;B_{p-1}-D_{p-1};O).$$
Thus for odd integer $i=1,3,5,\ldots$, 
we obtain
$\rank_{\FFF}(B_i-D_i;B_{i+1}-D_{i+1})\leq m$ 
by Lemma \ref{lem:AS:lemma4}.
Thus if $p$ is odd, we have
\begin{equation*}
\begin{split}
\rank_{\FFF}(A) & \leq n+\rank_{\FFF}(B_1-D_1;B_2-D_2)+ \cdots 
    + \rank_{\FFF}(B_{p-2}-D_{p-2};B_{p-1}-D_{p-1}) \\
 & \leq n+\frac{m(p-1)}{2}
\end{split}
\end{equation*}
and otherwise 
\begin{equation*}
\begin{split}
\rank_{\FFF}(A) & \leq n+\rank_{\FFF}(B_1-D_1;B_2-D_2)+ \cdots 
    + \rank_{\FFF}(B_{p-1}-D_{p-1};O) \\
 & \leq n+\frac{m(p-2)}{2}+n.
\end{split}
\end{equation*}
Furthermore, if $p$ is even and $m=n$, then
$\displaystyle{\rank_{\FFF}(A) \leq 2n+\frac{n(p-2)}{2}-1=\frac{n(p+2)}{2}-1}$
since we can take $D_{p-1}$ so that
$\rank(B_{p-1}-D_{p-1})\leq n-1$.
\end{proof}


Lemma~5 and Theorem~2 of \cite{Atkinson-Stephens} are also true over
the real number field whose proofs are quite similar.

\begin{lemma}[Lemma~5 \cite{Atkinson-Stephens}] \label{AK-Lemma5}
If $k\leq n$, then 
$$\maxrank_{\FFF}(m,n,mn-k)=m(n-k)+\maxrank_{\FFF}(m,k,mk-k).$$
\end{lemma}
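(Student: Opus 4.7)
The plan is to establish both directions of the equality.

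For the upper bound $\maxrank_{\FFF}(m,n,mn-k) \leq m(n-k) + \maxrank_{\FFF}(m,k,mk-k)$, I would argue by induction on $n - k$. The base case $n = k$ is immediate. For the inductive step, given $T = (A_1; \cdots; A_{mn-k}) \in T(m, n, mn-k)$ with $n > k$, the $m \times (mn-k)$ matrix $B$ whose $l$-th column is the last column of $A_l$ has rank at most $m$. Writing $B = \sum_{i=1}^m u_i w_i^T$ with $u_i \in \FFF^m$ and $w_i \in \FFF^{mn-k}$, the rank-$\leq m$ tensor $\tilde T := \sum_{i=1}^m u_i \otimes \eee_n \otimes w_i$ has slices agreeing with $T$ in the last column. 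Then $T - \tilde T$ has zero last column in every slice, and by a suitable third-mode change of basis (which preserves rank by Lemma~\ref{lem:rank basic1}) one arranges the residual to be expressible as a tensor in $T(m, n-1, m(n-1)-k)$, to which the inductive hypothesis applies, yielding $\rank_{\FFF} T \leq m + \maxrank_{\FFF}(m, n-1, m(n-1)-k)$.

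For the lower bound, I would exhibit a tensor realizing the value. Take $T_0 \in T(m, k, mk-k)$ with $\rank_{\FFF} T_0 = \rho_0 := \maxrank_{\FFF}(m, k, mk-k)$ and slices $B_1, \ldots, B_{mk-k}$, and define $T \in T(m, n, mn-k)$ whose first $m(n-k)$ slices are the matrix units $E_{j,s}$ for $1 \leq j \leq m$, $1 \leq s \leq n-k$, and whose remaining $mk - k$ slices are $(0_{m \times (n-k)} \mid B_s)$. The obvious decomposition gives $\rank_{\FFF} T \leq m(n-k) + \rho_0$. For the matching lower bound, given any rank decomposition $T = \sum_{i=1}^\rho u_i \otimes v_i \otimes w_i$, split $v_i = (v_i^{(1)}, v_i^{(2)}) \in \FFF^{n-k} \oplus \FFF^k$. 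Lemma~\ref{lem:rank basic1} implies that the rank-1 matrices $u_i (v_i^{(1)})^T$ must span the space of $m \times (n-k)$ matrices (forcing at least $m(n-k)$ indices with $v_i^{(1)} \neq 0$), while $u_i (v_i^{(2)})^T$ must span a superspace of $\langle B_1, \ldots, B_{mk-k}\rangle$ (forcing at least $\rho_0$ indices with $v_i^{(2)} \neq 0$).

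The hard part will be showing that these two sets of indices are essentially disjoint, so the counts genuinely add to $m(n-k) + \rho_0$. A single "mixed" term $u_i \otimes v_i \otimes w_i$ with both $v_i^{(1)} \neq 0$ and $v_i^{(2)} \neq 0$ could a priori contribute to both bounds. The resolution exploits the rigidity of the matrix-unit slices: the equations $\sum_i (w_i)_{l(j,s)} u_i v_i^T = E_{j,s}$ for $1 \leq j \leq m$ and $1 \leq s \leq n-k$ force linear dependences among the mixed terms, so any such term can be replaced by two pure-block terms without increasing the total count. This algebraic manipulation, essentially that of the Atkinson-Stephens original argument and transferring verbatim to the real setting, yields $\rho \geq m(n-k) + \rho_0$.
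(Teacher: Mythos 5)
The first thing to say is that the paper itself does not prove this lemma: it only remarks that the proof of Lemma~5 of Atkinson--Stephens carries over verbatim to $\RRR$. So the real comparison is between your argument and the original one, and your argument has genuine gaps in both directions.

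\emph{Upper bound.} Your inductive step does not close. After subtracting $\tilde T$, every slice of $T-\tilde T$ has zero last column, so the residual is an $m\times(n-1)\times(mn-k)$ tensor; but to invoke $\maxrank_{\FFF}(m,n-1,m(n-1)-k)$ you need the span of its slices to have dimension at most $m(n-1)-k$, and a third-mode change of basis cannot lower that dimension, since by Lemma~\ref{lem:rank basic1} the rank depends only on the span. That span is the projection of $\langle A_1,\ldots,A_{mn-k}\rangle$ onto the first $n-1$ columns, which in general is all of $\FFF^{m\times(n-1)}$: it has codimension $k$ in $\FFF^{m\times(n-1)}$ only if the original span contains every matrix supported on the $n$-th column. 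Concretely, take $m=n$, $k=1$ and slices spanning the trace-zero matrices; for no choice of column (even after a column change of basis) does this space contain all matrices supported on that column, so peeling off one column with $m$ rank-one terms supported on that column leaves a residual of full dimension $m(n-1)$, and your step yields only $\rank_{\FFF}T\le m+m(n-1)=mn$. The nontrivial content of the Atkinson--Stephens reduction is precisely that the $m$ rank-one matrices absorbing a column must not all be supported on that column (for the trace-zero space one needs something like $(\eee_n-\eee_{n-1})(\eee_n+\eee_{n-1})^T$ among them), and that choice is what your proposal omits.

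\emph{Lower bound.} You correctly isolate the crux --- the $\ge m(n-k)$ indices with $v_i^{(1)}\ne0$ and the $\ge\rho_0$ indices with $v_i^{(2)}\ne0$ need not be disjoint --- but you then assert the resolution rather than prove it. The proposed repair, replacing a mixed term by two pure-block terms ``without increasing the total count,'' is not an argument: splitting $u_i\otimes(v_i^{(1)},v_i^{(2)})\otimes w_i$ into two pure terms increases the number of summands by one, so ``without increasing the total count'' is exactly the statement requiring proof. Nor does dimension counting rescue it: it gives only $\rho\ge m(n-k)+(mk-k)=mn-k$, which is strictly weaker than $m(n-k)+\rho_0$ whenever $\rho_0>mk-k$ (e.g.\ $m=k=2$ with $V_2=\langle E_2,E_{12}\rangle$, where $\rho_0=3$). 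As it stands, both halves of your proof defer to the very step the lemma asks you to supply.
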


\begin{thm}[Theorem~2 \cite{Atkinson-Stephens}]
If $k\leq m\leq n$, then
$$\maxrank_{\FFF}(m,n,mn-k)=mn-k^2+\maxrank_{\FFF}(k,k,k^2-k)$$
\end{thm}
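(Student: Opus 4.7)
The plan is to iterate Lemma~\ref{AK-Lemma5} twice, exploiting the fact that $\maxrank_\FFF$ is invariant under any permutation of its three arguments (as noted in the proof of Proposition~\ref{rank@a.b.ab}).

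First, since $k \leq n$, Lemma~\ref{AK-Lemma5} gives directly
$$\maxrank_{\FFF}(m,n,mn-k) = m(n-k) + \maxrank_{\FFF}(m,k,mk-k).$$
Now I would rewrite the second term using the permutation symmetry of $\maxrank_\FFF$:
$$\maxrank_{\FFF}(m,k,mk-k) = \maxrank_{\FFF}(k,m,km-k).$$
Since the hypothesis $k \leq m$ is exactly what is needed to apply Lemma~\ref{AK-Lemma5} again with the triple $(k,m,km-k)$ in place of $(m,n,mn-k)$, I get
$$\maxrank_{\FFF}(k,m,km-k) = k(m-k) + \maxrank_{\FFF}(k,k,k^2-k).$$

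Substituting back and simplifying,
$$\maxrank_{\FFF}(m,n,mn-k) = m(n-k) + k(m-k) + \maxrank_{\FFF}(k,k,k^2-k) = mn - k^2 + \maxrank_{\FFF}(k,k,k^2-k),$$
which is the desired identity.

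There is no real obstacle here; the whole content has been absorbed into Lemma~\ref{AK-Lemma5}. The only thing to be careful about is verifying the hypotheses at each invocation: the first application needs $k \leq n$ (ensured by $k \leq m \leq n$), and the second application, after transposing the first two slots, needs $k \leq m$ (ensured by the assumption). The permutation symmetry of $\maxrank_\FFF$ is harmless and used explicitly in the preceding material.
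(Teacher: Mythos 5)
Your proof is correct, and it is essentially the intended argument: the paper gives no explicit proof (deferring to Atkinson--Stephens), but the theorem is placed immediately after Lemma~\ref{AK-Lemma5} precisely because it follows by applying that lemma twice, using the permutation invariance of $\maxrank_\FFF$ exactly as you do. Your verification of the hypotheses ($k\leq n$ for the first application, $k\leq m$ for the second) and the arithmetic $m(n-k)+k(m-k)=mn-k^2$ are all in order.
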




Theorem~1 by Atkinson-Lloyd \cite{Atkinson-Lloyd} is also
slightly generalized.

\begin{thm} \label{thm:AL:rank@n.n.p}
Let $n\leq m$.
If $p$ is even, it holds
$$
\maxrank_{\FFF}(n,m,p)
   \leq 
\frac{m(p-1)}{2}
+n.
$$
\end{thm}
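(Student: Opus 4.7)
My plan is to adapt the even-$p$ argument of Theorem~\ref{thm:maxrankAK1}, sharpening the treatment of the final unpaired matrix in order to save $n-m/2$ on the bound (which is the amount by which the new theorem improves on Theorem~\ref{thm:maxrankAK1} in the regime $n>m/2$). Following the proof of that theorem, I would first choose non-singular $P$ and $Q$ with $PA_pQ=\begin{pmatrix}E_r&0\\0&0\end{pmatrix}$ and set $B_j:=PA_jQ$; by Lemma~\ref{lem:rank basic1} this leaves the rank unchanged. Then I would apply Lemma~\ref{lem:as lemma3} to pick diagonal matrices $D'_1,\dots,D'_{p-1}$ so that for each $i=1,\dots,(p-2)/2$ the pair $\bigl((B_{2i-1})_{\leq n}-D'_{2i-1},(B_{2i})_{\leq n}-D'_{2i}\bigr)$ satisfies the hypothesis of Lemma~\ref{lem:AS:lemma4}. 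Setting $D_j:=(D'_j,O)$ for $j<p$ and $D_p:=B_p$, the standard decomposition
\[
(B_1;\dots;B_p) = (D_1;\dots;D_p) + (B_1-D_1;\dots;B_{p-1}-D_{p-1};O)
\]
yields a diagonal summand of rank at most $n$ and a residual in which each of the $(p-2)/2$ regularized pairs contributes at most $m$ by Lemma~\ref{lem:AS:lemma4}, accounting for $m(p-2)/2$ in total.

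The essential new point is to bound the contribution of the trailing term $(B_{p-1}-D_{p-1};O)$ by $\lceil m/2\rceil$ rather than the naive $\rank(B_{p-1}-D_{p-1})\leq n$ used in Theorem~\ref{thm:maxrankAK1}; summing the contributions then gives $n+m(p-2)/2+\lceil m/2\rceil=n+\lceil m(p-1)/2\rceil$, which is the desired bound. I would try to achieve this by introducing an auxiliary diagonal matrix $\Delta$ and rewriting the decomposition as
\[
(B_1;\dots;B_p) = (D_1;\dots;D_{p-1};D_p+\Delta) + (B_1-D_1;\dots;B_{p-1}-D_{p-1};-\Delta).
\]
Since $D_p+\Delta$ remains diagonal, the first summand still has rank $\leq n$. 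With $\Delta$ chosen via Lemma~\ref{lem:as lemma3} so that $(B_{p-1}-D_{p-1};-\Delta)$ satisfies the hypothesis of Lemma~\ref{lem:AS:lemma4}, the augmented pair has rank $\leq m$; the promised $\lceil m/2\rceil$ estimate would then come from a Kronecker-canonical-form analysis of the matrix pencil $\lambda(B_{p-1}-D_{p-1})+\mu(-\Delta)$, which is consistent with the classical base case $\maxrank_{\FFF}(n,n,2)\leq\lceil 3n/2\rceil$.

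The main obstacle is precisely this last step: extracting the sharper $\lceil m/2\rceil$ bound for the augmented trailing pair in place of Lemma~\ref{lem:AS:lemma4}'s generic $\leq m$. Since $\rank(B_{p-1}-D_{p-1};O)$ can easily equal $n>m/2$ when $n>m/2$, the saving must come from an interaction between the augmented pair and the already-present diagonal tensor $(D_1;\dots;D_{p-1};D_p+\Delta)$, for instance via the moreover clause of Lemma~\ref{lem:as lemma3} that allows certain diagonal entries of the regularizer to be pinned to zero. Verifying this delicate absorption—in effect, trading half of Lemma~\ref{lem:AS:lemma4}'s $m$ rank-one terms against the compensating $-\Delta$ correction without inflating the rank of the diagonal summand past $n$—is the technical core of the proof.
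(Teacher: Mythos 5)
There is a genuine gap, and it sits exactly at the point you yourself flag as ``the technical core.'' In your decomposition the trailing contribution is $\rank_\FFF(B_{p-1}-D_{p-1};-\Delta)$, and no choice of the diagonal matrix $\Delta$ can force this below $\lceil m/2\rceil$: by Lemma~\ref{lem:rank of tensor and matrix} this rank is at least $\rank(B_{p-1}-D_{p-1})$, which is generically $n>m/2$. So the proposed $\lceil m/2\rceil$ estimate for the augmented trailing pair is false as stated, and the hoped-for ``absorption'' into the diagonal summand cannot be arranged within the decomposition you wrote down; the obstacle you identify is not a delicate verification but a sign that the split itself is wrong.

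The paper's proof reverses which object costs $m/2$. It invokes Corollary~3.10 of \cite{Sumi-Miyazaki-Sakata} --- a Kronecker-canonical-form statement about $2$-slice tensors, which is indeed the tool you were reaching for via $\maxrank_\FFF(n,n,2)\leq\lceil 3n/2\rceil$: there exist a tensor $(T_1;T_2)$ with $\rank_\FFF(T_1;T_2)\leq m/2$ and non-singular $P$, $Q$ such that $P(A_p-T_1)Q$ and $P(A_{p-1}-T_2)Q$ are \emph{both} of the form $(D,O)$ with $D$ diagonal. Thus the $m/2$ is paid for the correction $(T_1;T_2)$, not for a residual pair, and the two diagonalized remainders $D_{p-1}$, $D_p$ ride along with $D_1,\dots,D_{p-2}$ inside a single diagonal tensor of rank at most $n$; the remaining $p-2$ slices are paired exactly as in Theorem~\ref{thm:maxrankAK1} via Lemmas~\ref{lem:as lemma3} and~\ref{lem:AS:lemma4}, giving $m(p-2)/2+(2n+m)/2$. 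To complete your argument without that citation you would have to prove the analogous structural fact --- every $n\times m$ pencil is a rank-$\leq m/2$ tensor plus a pair simultaneously equivalent to padded diagonal matrices --- and that does not follow from the lemmas available in this paper.
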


\begin{proof}
Let $A=(A_1;\cdots;A_p)\in\FFF^{n\times m\times p}$.
By 
\cite[Corollary~3.10]{Sumi-Miyazaki-Sakata},
there are tensor $T$ and non-singular matrices $P$ and $Q$ so that
$\rank_{\FFF}(T_1;T_2)\leq m/2$ and 
$P(A_p-T_1)Q$ and $P(A_{p-1}-T_2)Q$ are both of form $(D,O)$ with some 
diagonal matrix $D$.
Set $B_j=PA_jQ$ for $j=1,\ldots, p-2$, 
$D_{p-1}=P(A_{p-1}-T_2)Q$, and $D_{p}=P(A_p-T_1)Q$.
For diagonal matrices $D_{j}$ ($j=1,\ldots,p-2$), we have
\begin{equation*}
\begin{split}
\rank_{\FFF}(A)
  & \leq \rank_{\FFF}(B_1;\cdots;B_{p-2};D_{p-1};D_p)+\frac{m}{2} \\
  & \leq \rank_{\FFF}(B_1-D_1;\cdots;B_{p-2}-D_{p-2};O;O)+
    \rank_{\FFF}(D_1;\cdots;D_p)+\frac{m}{2} \\
  & \leq \sum_{j=1}^{(p-2)/2} 
       \rank_{\FFF}(B_{2j-1}-D_{2j-1};B_{2j}-D_{2j})+\frac{2n+m}{2}.
\end{split}
\end{equation*}
Thus by Lemmas \ref{lem:as lemma3} and \ref{lem:AS:lemma4},
we have
$$\rank_{\FFF}(A)
  \leq \frac{m(p-2)}{2}+\frac{2n+m}{2}
  =\frac{m(p-1)+2n}{2}
$$
 for some $D_1$, \ldots, $D_{p-2}$.
\end{proof}

\section{$\maxrank(m,n,3)$\label{sec:maxrank@n.n.3}}
In this section, we give a proof of the following statement 
(Theorem \ref{thm:as no proof})
asserted in
\cite{Atkinson-Stephens} without proof.
In fact, we prove more general statements over $\CCC$ 
and, under mild condition, over $\RRR$ also.
See Theorems \ref{thm:cont sing} and \ref{thm:non-square}.

\begin{thm}[\cite{Atkinson-Stephens}]\label{thm:as no proof}
$$ 
\maxrank_{\CCC}(n,n,3) \leq 2n-1 \text{ and } \maxrank_{\CCC}(n,n+1,3) \leq 2n.
$$ 
\end{thm}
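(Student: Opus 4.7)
The plan is to establish both inequalities by exhibiting a span of $2n-1$ (respectively $2n$) rank-$1$ matrices containing $\langle A_1, A_2, A_3\rangle$, after which Lemma~\ref{lem:rank basic1} delivers the rank bound.

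For $\maxrank_\CCC(n,n,3) \leq 2n-1$, first reduce to the case that $A_3$ is invertible: when $r := \rank_\CCC A_3 < n$, the proof of Theorem~\ref{thm:maxrankAK1} together with the latter half of Lemma~\ref{lem:as lemma3} (applied to the top-left $r\times r$ block) gives rank at most $r + n \leq 2n-1$, since the diagonal piece $(D_1; D_2; D_3)$ contributes only $r$. Assuming $A_3$ invertible, normalize $A_3 = E_n$ via Lemma~\ref{lem:rank basic1}(2). Next, replace $A_1$ by a generic combination $A_1 + \alpha A_2$ (a basis change in the third direction that preserves $A_3 = E_n$ and tensor rank) so that $A_1$ acquires $n$ distinct eigenvalues, and then conjugate to $A_1 = \diag(\lambda_1, \ldots, \lambda_n)$ while keeping $A_3 = E_n$. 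The heart of the argument is that $\det(A_2 + \diag(d_1, \ldots, d_n))$, viewed as a polynomial in $d_1, \ldots, d_n$, contains the monomial $d_1 \cdots d_n$ with coefficient $1$, hence is nonzero and has a zero over $\CCC$. Choosing $D = \diag(d_i)$ at such a zero makes $A_2 + D$ singular, so $A_2 + D = \sum_{k=1}^{n-1} u_k v_k^T$ for some vectors $u_k, v_k$. Since $A_1, A_3, D$ all lie in $\langle E_{11}, \ldots, E_{nn}\rangle$ and $A_2 + D$ lies in $\langle u_1 v_1^T, \ldots, u_{n-1} v_{n-1}^T\rangle$, one obtains
\[
\langle A_1, A_2, A_3\rangle \;\subseteq\; \langle E_{11}, \ldots, E_{nn}, u_1 v_1^T, \ldots, u_{n-1} v_{n-1}^T\rangle,
\]
a span of $2n-1$ rank-$1$ matrices.

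For $\maxrank_\CCC(n, n+1, 3) \leq 2n$, reduce to $A_3 = (E_n, 0)$ and apply the square result to the first-$n$-column sub-tensor, obtaining $(B_1; B_2; E_n) = \sum_{i=1}^{2n-1} u_i \otimes v_i \otimes w_i$ with $u_i \in \CCC^n$, $v_i \in \CCC^n$, $w_i \in \CCC^3$. Extend each summand to $\CCC^n \otimes \CCC^{n+1} \otimes \CCC^3$ by appending a free last-column parameter $s_i$ to the second factor, and adjoin one further rank-$1$ tensor $u' \otimes e_{n+1} \otimes w'$. Agreement on the first $n$ columns is automatic, and the remaining last-column constraint becomes the matrix equation
\[
(c_1,\, c_2,\, 0) - \sum_{i=1}^{2n-1} s_i u_i w_i^T \;=\; u'(w')^T
\]
in $\CCC^{n \times 3}$, where $c_k$ denotes the last column of $A_k$. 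A dimension count — the affine subspace $(c_1, c_2, 0) + \langle u_i w_i^T : 1 \leq i \leq 2n-1\rangle$ has dimension at most $2n-1$, while the rank-$\leq 1$ variety in $\CCC^{n \times 3}$ has dimension $n+2$ within an ambient space of dimension $3n$ — yields an expected intersection of dimension $1$, so a solution exists, producing the desired $2n$-term rank-$1$ decomposition.

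The main obstacle will be the degenerate case of the square argument in which $A_1 + \alpha A_2$ has repeated eigenvalues for every $\alpha \in \CCC$, so no diagonalization is available while preserving $A_3 = E_n$. One route is a finer application of Lemma~\ref{lem:as lemma3}, particularly part~(b), which exploits off-diagonal structure in $A_1$ near the $(n, n)$ cell; another is direct case analysis verifying that such restricted pencils already satisfy $\rank_\CCC \leq 2n-1$. A secondary point is upgrading the generic dimension count in the rectangular argument to an unconditional existence statement for every configuration of the $u_i w_i^T$ arising from the square decomposition, which may require an explicit algebraic construction rather than a pure parameter-counting argument.
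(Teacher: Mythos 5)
Your route is genuinely different from the paper's, and its non-degenerate core is correct and elegant: with $A_3=E_n$ and $A_1=\diag(\lambda_1,\dots,\lambda_n)$, the monomial $d_1\cdots d_n$ in $\det(A_2+\diag(d_1,\dots,d_n))$ guarantees a diagonal $D$ with $A_2+D$ singular, and then $\langle A_1,A_2,A_3\rangle$ sits inside a span of $n+(n-1)$ rank-one matrices. The problem is that the two ``obstacles'' you list at the end are not loose ends to be tidied up --- they are the entire content of the theorem, and neither is resolved. Pencils in which no member $A_1+\alpha A_2+\beta E_n$ has $n$ distinct eigenvalues (e.g.\ $A_1,A_2$ simultaneously strictly upper triangular) genuinely occur, and your only plan for them is a pointer to Lemma~\ref{lem:as lemma3}(b) or ``direct case analysis''; the paper's Lemmas~\ref{lem:make nonzero}, \ref{lem:use ab}, \ref{lem:prep as lemma3} and the induction on $n$ in Theorem~\ref{thm:cont sing} exist precisely to carry out that analysis. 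Worse, your one-sentence disposal of the case $r=\rank A_3<n$ does not work: to make the diagonal piece $(D_1;D_2;D_3)$ contribute only $r$, you would need the perturbations $X,Y$ of Lemma~\ref{lem:as lemma3} to vanish on the last $n-r$ diagonal cells, and the hypothesis the lemma requires for that is nonsingularity (plus a distinct-eigenvalue condition) of the bottom-right $(n-r)\times(n-r)$ principal submatrix of $A_1$ --- exactly what fails in the hard case, where that block is identically zero. Since over $\CCC$ the form $\det(xA_1+yA_2+zA_3)$ always has a nontrivial zero, one may always normalize $A_3$ to be singular; so the case you dismiss quickly is the whole problem, and the case you treat carefully is the one that can be avoided.

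The rectangular argument has an analogous gap: a dimension count does not produce a point of intersection. Two subvarieties of $\CCC^{n\times 3}$ whose dimensions sum to more than $3n$ need not meet; passing to $\mathbb{P}^{3n-1}$ the projective closures do meet, but the intersection can lie entirely in the hyperplane at infinity, i.e.\ in $\langle u_1w_1^T,\dots,u_{2n-1}w_{2n-1}^T\rangle\cap\{\mathrm{rank}\le 1\}$, which yields no solution of your affine equation $(c_1,c_2,0)-\sum_i s_iu_iw_i^T=u'(w')^T$. You flag this yourself but offer no repair. The paper sidesteps both difficulties by working in the opposite order: Theorem~\ref{thm:non-square} (which gives the stronger and field-independent bound $\maxrank_\FFF(m,n,3)\le m+n-1$ for all $m<n$) uses Lemmas~\ref{lem:prep non-square} and \ref{lem:use non-square prep} to build explicit diagonal perturbations $D_1,D_2$ and a vector $\ppp$ that annihilate a column of the perturbed tensor, rather than trying to extend a decomposition of the square subtensor. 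As it stands, your proposal proves the theorem only for tensors admitting a member of the pencil with distinct eigenvalues, which is a generic but not exhaustive class.
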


We begin with the following lemma.

\begin{lemma}\label{lem:make nonzero}
Let $m$ be an integer with $m\geq 2$.
If $\aaa_1$, \ldots, $\aaa_s$, $\bbb_1$, \ldots, $\bbb_t$
are $m$-dimensional non-zero vectors
and
$A_1$, \ldots, $A_u$, $B_1$, \ldots, $B_v$
are $m\times 2$ matrices of rank $2$,
then there is a non-singular matrix $P$ such that
any entry of 
$P\aaa_i$ ($i=1$, \ldots, $s$),
$\bbb_i^T P^{-1}$ ($i=1$, \ldots, $t$)
and any 2-minor of
$PA_i$ ($i=1$, \ldots, $u$)
and
$B_i^T P^{-1}$ ($i=1$, \ldots, $v$)
is not zero.
\end{lemma}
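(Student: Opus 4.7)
The plan is to view the $m^2$ entries of $P$ as indeterminates over $\FFF$ and to express each of the required conditions as the non-vanishing of a polynomial in those indeterminates. Because $\FFF$ is infinite (being $\RRR$ or $\CCC$) and a finite product of nonzero polynomials is nonzero, that product has an $\FFF$-point at which none of its factors vanishes; choosing $P$ to be such a point realises every condition simultaneously.

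The polynomials in $\FFF[p_{11},\ldots,p_{mm}]$ that must be non-zero at $P$ are: $\det P$ (so that $P$ is non-singular); each entry of $P\aaa_i$ for $i=1,\ldots,s$ (a linear form); each $2\times 2$ minor of $PA_i$ for $i=1,\ldots,u$ (a quadratic form); and, after clearing the denominator via $P^{-1}=\mathrm{adj}(P)/\det P$, each entry of $\bbb_i^T P^{-1}$ for $i=1,\ldots,t$ and each $2\times 2$ minor of $B_i^T P^{-1}$ for $i=1,\ldots,v$. The key step is to show that each of these is nonzero as a polynomial. For $(P\aaa_i)_k$, since $\aaa_i\neq\zerovec$, some row of $P$ is not orthogonal to $\aaa_i$, so this linear form is not identically zero. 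For a $2\times 2$ minor of $PA_i$ indexed by rows $k<l$, the rank-$2$ hypothesis on $A_i$ yields indices $a<b$ with a nonzero $(a,b)$-minor of $A_i$; substituting $\eee_a^T$ and $\eee_b^T$ for the $k$-th and $l$-th rows of $P$ makes the $(k,l)$-minor of $PA_i$ equal to that nonzero minor of $A_i$. For the conditions involving $P^{-1}$, I would use that $P\mapsto P^{-1}$ is a bijection of $\GL_m(\FFF)$ onto itself: writing $Q=P^{-1}$, the conditions on $\bbb_i^T Q$ and $B_i^T Q$ are exactly of the two preceding types and hence hold for some $Q\in\GL_m(\FFF)$, so the corresponding polynomials in $P$ do not vanish identically.

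Putting this together, the product of $\det P$ with all the nonzero polynomials above is itself a nonzero element of $\FFF[p_{11},\ldots,p_{mm}]$, and since $\FFF$ is infinite it has some $\FFF$-point in its non-vanishing locus; any $P$ chosen at such a point meets every requirement of the lemma. The main obstacle is the verification that each individual condition defines a nonzero polynomial, particularly the ones involving $P^{-1}$, where one first clears the $\det P$ denominator and then produces a witness matrix through the inversion bijection on $\GL_m(\FFF)$; once this is in place, the generic-point conclusion is routine.
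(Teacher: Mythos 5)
Your proposal is correct and follows essentially the same route as the paper: treat the entries of $P$ as indeterminates, clear the $\det P$ denominator using the adjugate (the paper writes $P^{-1}=(\det P)^{-1}\cof(P)$), check that each required condition is the non-vanishing of a nonzero polynomial, multiply them all together, and use that $\FFF$ is infinite to find a point where the product does not vanish. Your witness-substitution arguments for why each factor is a nonzero polynomial are a welcome elaboration of a step the paper merely asserts.
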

\begin{proof}
Let $X=(x_{ij})$ be an $m\times m$ matrix of indeterminates,
i.e., $\{x_{ij}\}_{i,j=1}^m$ are independent indeterminates.
None of the following polynomials of $x_{ij}$ is zero,
where $\cof(X)$ is the matrix of cofactors of $X$.
\begin{itemize}
\item
$\det X$.
\item
$j$-th entry of $X\aaa_i$.
\item
$j$-th entry of $\bbb_i^T \cof(X)$.
\item
2-minor of $XA_i$ consisting of $j$-th and $k$-th
 rows
with $1\leq j< k\leq m$.
\item
2-minor of $B_i^T\cof(X)$ consisting of $j$-th and $k$-th 
columns 
with $1\leq j<k\leq m$.
\end{itemize}
So the product $f(x_{ij})$ of all the above polynomials  is not zero.
Since $\FFF$ is an infinite field,
we can take $p_{ij}\in\FFF$ so that $f(p_{ij})\neq 0$.
Then it is clear that
$P=(p_{ij})$ meets our needs since $P^{-1}=(\det P)^{-1}\cof(P)$.
\end{proof}
In order to estimate the rank of $n\times n\times 3$ tensors,
we prepare the following lemmas.
\begin{lemma}\label{lem:use ab}
Let $(A_1;A_2;A_3)$ be an $m\times n\times 3$ tensor with $m\leq n$ such that
$A_3=(D,O)$ where $D$ is a diagonal matrix with $0$ entry in $(m,m)$ cell
and
$(A_1)_{\leq m}$, $(A_2)_{\leq m}$ satisfy the condition of (a) or (b) of 
Lemma \ref{lem:as lemma3}.
Then
$\rank_\FFF(A_1;A_2;A_3)\leq m+n-1$.
\end{lemma}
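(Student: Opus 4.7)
The plan is to decompose $(A_1;A_2;A_3)$ as a sum of two tensors: one whose third slice vanishes (so it is effectively a two-slice tensor handled by Lemma~\ref{lem:AS:lemma4}), and one supported on the diagonal of the leading $m\times m$ block (whose rank I bound by direct inspection).

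First I would invoke Lemma~\ref{lem:as lemma3} applied to the $m\times m$ matrices $(A_1)_{\leq m}$ and $(A_2)_{\leq m}$. Since the hypothesis says these satisfy condition (a) or (b) of that lemma, the ``moreover'' part produces (after negating its $X$ and $Y$) diagonal $m\times m$ matrices $D_1'$ and $D_2'$ such that $(A_1)_{\leq m}-D_1'$ is non-singular, $((A_1)_{\leq m}-D_1')^{-1}((A_2)_{\leq m}-D_2')$ has $m$ distinct eigenvalues in $\FFF$, and the $(m,m)$ entries of both $D_1'$ and $D_2'$ vanish. Setting $\tilde D_i:=(D_i',O)$ as $m\times n$ matrices I then write
\[
(A_1;A_2;A_3) \;=\; (A_1-\tilde D_1;\,A_2-\tilde D_2;\,O) \;+\; (\tilde D_1;\,\tilde D_2;\,A_3).
\]

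For the first summand the third slice is zero, so (by Lemma~\ref{lem:rank basic1}) its rank equals that of the two-slice tensor $(A_1-\tilde D_1;\,A_2-\tilde D_2)$; the leading $m\times m$ blocks satisfy the hypothesis of Lemma~\ref{lem:AS:lemma4}, which bounds this rank by $n$. The second summand is concentrated in the cells $(j,j,k)$ with $1\leq j\leq m$ and $1\leq k\leq 3$, so it decomposes as
\[
\sum_{j=1}^{m}\eee_j\otimes\eee_j\otimes\bigl((D_1')_{jj},(D_2')_{jj},D_{jj}\bigr)^T,
\]
a sum of at most $m-1$ non-zero rank-one tensors, because the $j=m$ term is zero: $(D_1')_{mm}=(D_2')_{mm}=0$ by construction and $D_{mm}=0$ by hypothesis. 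Adding the two bounds yields $\rank_\FFF(A_1;A_2;A_3)\leq n+(m-1)=m+n-1$.

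There is no serious obstacle: the entire argument is bookkeeping around the two auxiliary lemmas. The only non-trivial matching is the alignment of zero patterns — the $(m,m)$ cells of $D_1'$ and $D_2'$ must vanish in order for the $j=m$ slice of the diagonal summand to drop out, and this is precisely what the ``moreover'' clause of Lemma~\ref{lem:as lemma3} guarantees under condition (a) or (b).
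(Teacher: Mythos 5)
Your proof is correct and follows essentially the same route as the paper: apply the ``moreover'' clause of Lemma~\ref{lem:as lemma3} to the leading $m\times m$ blocks to get diagonal perturbations vanishing in the $(m,m)$ cell, split the tensor into a two-slice part bounded by $n$ via Lemma~\ref{lem:AS:lemma4} and a diagonal part bounded by $m-1$. The only difference is cosmetic (a sign convention on the perturbation), and you are in fact slightly more explicit than the paper about why the diagonal summand contributes only $m-1$.
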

\begin{proof}
By Lemma \ref{lem:as lemma3},
there are $m\times m$ diagonal matrices $D_1$ and $D_2$ with 0 entry in $(m,m)$ cell
such that
$(A_1+(D_1,O))_{\leq m}$ is non-singular and
$((A_1+(D_1,O))_{\leq m})^{-1}
((A_2+(D_2,O))_{\leq m})$ has $m$ distinct eigenvalues.
Therefore by Lemma \ref{lem:AS:lemma4}
\begin{eqnarray*}
&&\rank_\FFF(A_1;A_2;A_3)\\
&\leq&\rank_\FFF(A_1+(D_1,O);A_2+(D_2,O);O)+\rank_\FFF(-(D_1,O);-(D_2,O);A_3)\\
&\leq& n+m-1.
\end{eqnarray*}
\end{proof}

\begin{lemma}\label{lem:prep as lemma3}
Let $n$ be an integer with $n\geq 3$
and
$A_1$, $A_2$ $n\times n$ matrices with
$(n,n)\not\in\supp(A_1)\cup\supp(A_2)$.
Suppose that
$(A_1)_{=\{n\}}\neq\zerovec$ and $(A_1)^{=\{n\}}\neq\zerovec^T$
and 
for any $t\in\FFF$,
$(tA_1+A_2)_{=\{n\}}\neq\zerovec$ or 
$(tA_1+A_2)^{=\{n\}}\neq\zerovec^T$.
Then there is a non-singular $(n-1)\times (n-1)$ matrix $P$ such that
$A=\diag(P,1)A_1\diag(P,1)^{-1}$
and
$B=\diag(P,1)A_2\diag(P,1)^{-1}$
satisfy the condition of (b) in 
Lemma \ref{lem:as lemma3}.
\end{lemma}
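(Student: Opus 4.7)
The plan is to translate condition (b) of Lemma \ref{lem:as lemma3} into three scalar non-vanishing conditions on $P$ and then exhibit $P \in \GL_{n-1}(\FFF)$ satisfying all of them simultaneously by a Zariski-density argument. Write $\aaa_i \in \FFF^{n-1}$ for the first $n-1$ entries of the $n$-th column of $A_i$ and $\bbb_i \in \FFF^{n-1}$ for the transpose of the first $n-1$ entries of the $n$-th row of $A_i$, $i = 1, 2$; by hypothesis $\aaa_1 \neq \zerovec$ and $\bbb_1 \neq \zerovec$. Conjugation by $\diag(P, 1)$ leaves the $(n, n)$-entry untouched, so $(n, n) \notin \supp(A) \cup \supp(B)$ is automatic, while a block computation gives $A_{n-1, n} = (P \aaa_1)_{n-1}$, $A_{n, n-1} = (\bbb_1^T P^{-1})_{n-1}$, and analogously for $B$. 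Thus condition (b) reduces to the three inequalities
\begin{equation*}
(P \aaa_1)_{n-1} \neq 0, \qquad (\bbb_1^T P^{-1})_{n-1} \neq 0, \qquad (P C P^{-1})_{n-1, n-1} \neq 0,
\end{equation*}
where $C := \aaa_2 \bbb_1^T - \aaa_1 \bbb_2^T \in \FFF^{(n-1) \times (n-1)}$.

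The decisive step, and in my view the main obstacle, is to show that $C \neq 0$; this is the only place where I would invoke the ``for any $t$'' hypothesis. If $C = 0$, then the rank-at-most-one outer products $\aaa_2 \bbb_1^T$ and $\aaa_1 \bbb_2^T$ coincide. Combined with $\aaa_1, \bbb_1 \neq \zerovec$, a short case analysis forces either $\aaa_2 = \bbb_2 = \zerovec$ (yielding the forbidden simultaneous vanishing at $t = 0$), or $\aaa_2 = s \aaa_1$ and $\bbb_2 = s \bbb_1$ for some $s \in \FFF$ (yielding it at $t = -s$), contradicting the hypothesis in both branches.

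With $C \neq 0$ in hand, each of the three inequalities defines a nonempty Zariski-open subset of $\GL_{n-1}(\FFF)$: the first is a nonzero linear form in row $n-1$ of $P$ because $\aaa_1 \neq \zerovec$; the second equals $\det(P') / \det(P)$, where $P'$ is obtained from $P$ by replacing row $n-1$ with $\bbb_1^T$, and is not identically zero because $\bbb_1 \neq \zerovec$. For the third, if $(P C P^{-1})_{n-1, n-1}$ vanished identically on $\GL_{n-1}(\FFF)$ then every conjugate of $C$ would have zero $(n-1, n-1)$-entry, but for any nonzero $C$ one can exhibit a conjugate with nonzero $(n-1, n-1)$-entry (permute a nonzero diagonal entry into place, or if all diagonal entries vanish and $C_{ij} \neq 0$ for some $i \neq j$, observe that $(I + E_{ji}) C (I - E_{ji})$ has $(j, j)$-entry equal to $C_{ij}$ and then compose with a permutation). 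Since $\FFF$ is an infinite field, these three open sets have nonempty intersection, producing the required $P$.
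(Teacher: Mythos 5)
Your proof is correct, and it takes a genuinely tidier route than the paper's, although both ultimately rest on the same mechanism: a product of finitely many not-identically-zero polynomials in the entries of $P$ is a nonzero polynomial, so over the infinite field $\FFF$ a suitable $P$ exists (this is exactly the device of Lemma \ref{lem:make nonzero}). The difference lies in how the cross-ratio condition of (b) in Lemma \ref{lem:as lemma3} is secured. The paper argues by cases on whether $\rank(\aaa_1,\aaa_2)=2$, $\rank(\bbb_1,\bbb_2)=2$, or both ranks are $1$: in the first two cases it makes all relevant entries and $2$-minors nonzero via Lemma \ref{lem:make nonzero} and then, if the resulting pair of vectors is still linearly dependent, repairs this with an explicit shear $Q_2=E_{n-1}+tE_{n-1,n-2}$; in the third case it deduces the ratio inequality directly from the ``for any $t$'' hypothesis. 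You instead package the ratio inequality as the single condition $(PCP^{-1})_{n-1,n-1}\neq 0$ with $C=\aaa_2\bbb_1^T-\aaa_1\bbb_2^T$, prove $C\neq\zerovec$ from the hypothesis (your two branches $\aaa_2=\bbb_2=\zerovec$ and $\aaa_2=s\aaa_1$, $\bbb_2=s\bbb_1$ are indeed the only ways $\aaa_2\bbb_1^T=\aaa_1\bbb_2^T$ can hold with $\aaa_1,\bbb_1\neq\zerovec$, and they contradict the hypothesis at $t=0$ and $t=-s$ respectively, using that the $(n,n)$ entries vanish so the $n$-th row/column of $tA_1+A_2$ is determined by its first $n-1$ entries), and then conclude by genericity; the only extra work is exhibiting one conjugate of a nonzero $C$ with nonzero $(n-1,n-1)$ entry, which your permutation/shear construction does (and which is available since $n-1\geq 2$). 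What your approach buys is the elimination of both the case split and the shear correction; what the paper's buys is that it never needs to identify the invariant $C$ and stays entirely within the ``make all entries and minors nonzero'' framework it has already set up.
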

\begin{proof}
Set
$A_1=
\begin{pmatrix}(A_1)_{\leq n-1}^{\leq n-1}&\aaa_1\\ \bbb_1^T&0\end{pmatrix}$
and
$A_2=
\begin{pmatrix}(A_2)_{\leq n-1}^{\leq n-1}&\aaa_2\\ \bbb_2^T&0\end{pmatrix}$.

First assume that
$\rank(\aaa_1,\aaa_2)=2$.
Then by Lemma \ref{lem:make nonzero}, we see that there is a non-singular
$(n-1)\times (n-1)$ matrix $Q_1$ such that any entry of
$Q_1\aaa_1$ and $\bbb_1^TQ_1^{-1}$ and any 2-minor of
$Q_1(\aaa_1,\aaa_2)$ is not zero.
Set
$Q_1(\aaa_1,\aaa_2)=(a_{ij})$ and 
$(\bbb_1,\bbb_2)^TQ^{-1}=(b_{ij})$.
If $(a_{n-1,1},a_{n-1,2})$ and $(b_{1,n-1}, b_{2,n-1})$ are linearly independent,
then $P=Q_1$ meets our needs since
$$
\diag(Q_1,1)A_i\diag(Q_1,1)^{-1}=
\begin{pmatrix}Q_1(A_i)_{\leq n-1}^{\leq n-1}Q_1^{-1}&Q_1\aaa_i\\ \bbb_i^TQ_1^{-1}&0\end{pmatrix}.
$$
If $(a_{n-1,1},a_{n-1,2})$ and $(b_{1,n-1}, b_{2,n-1})$ are linearly dependent,
then
$(ta_{n-2,1}+a_{n-1,1},ta_{n-2,2}+a_{n-1,2})$ and $(b_{1,n-1}, b_{2,n-1})$ are linearly independent
for any $t\in\FFF\setminus\{0\}$
since
$(a_{n-2,1},a_{n-2,2})$ and $(a_{n-1,1}, a_{n-1,2})$ are linearly independent
by the choice of $Q_1$.
Choose $t\in\FFF\setminus\{0\}$ so that
$ta_{n-2,1}+a_{n-1,1}\neq 0$
and set 
$Q_2=E_{n-1}+tE_{n-1,n-2}$.
Then $P=Q_2Q_1$ meets our needs since
$$
\diag(Q_2Q_1,1)A_i\diag(Q_2Q_1,1)^{-1}=
\begin{pmatrix}Q_2Q_1(A_i)_{\leq n-1}^{\leq n-1}Q_1^{-1}Q_2^{-1}&Q_2Q_1\aaa_i\\
 \bbb_i^TQ_1^{-1}Q_2^{-1}&0\end{pmatrix}
$$
and
$Q_2^{-1}=E_{n-1}-tE_{n-1,n-2}$,
the
$(n-1,n)$ entry of 
$\diag(Q_2Q_1,1)A_i\diag(Q_2Q_1,1)^{-1}$ is $ta_{n-2,i}+a_{n-1,i}$
and
$(n,n-1)$ entry of
$\diag(Q_2Q_1,1)A_i\diag(Q_2Q_1,1)^{-1}$ is $b_{i,n-1}$.
Therefore we have proved the case where
$\rank(\aaa_1,\aaa_2)=2$.

We can prove the case where $\rank(\bbb_1,\bbb_2)=2$ by the same way.

Now assume that
$\rank(\aaa_1,\aaa_2)=\rank(\bbb_1,\bbb_2)=1$.
Choose as before, a non-singular
$(n-1)\times (n-1)$ matrix $Q_1$ such that any entry of
$Q_1\aaa_1$ and $\bbb_1^TQ_1^{-1}$
is not zero
and set
$Q_1(\aaa_1,\aaa_2)=(a_{ij})$,
$(\bbb_1,\bbb_2)^TQ^{-1}=(b_{ij})$.
Then $a_{n-1,2}/a_{n-1,1}\neq b_{2,n-1}/b_{1,n-1}$,
since otherwise
$-a_{n-1,2}/a_{n-1,1}\aaa_1+\aaa_2=
-b_{2,n-1}/b_{1,n-1}\aaa_1+\aaa_2=
-b_{2,n-1}/b_{1,n-1}\bbb_1+\bbb_2=\zerovec$,
contradicts the assumption.
Therefore $P=Q_1$ meets our needs.
\end{proof}
Now we state the following
\begin{thm}\label{thm:cont sing}
Let $T=(A_1;A_2;A_3)$ be an $n\times n\times 3$ tensor.
If $\langle A_1,A_2,A_3\rangle$ contains a non-zero singular matrix,
then
$\rank_\FFF T\leq 2n-1$.
In particular,
if $\FFF=\CCC$ or $n$ is odd,
then $\rank_\FFF T\leq 2n-1$.
\end{thm}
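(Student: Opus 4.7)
The plan is to bring $A_3$ into a convenient normal form and then invoke Lemma~\ref{lem:use ab}. By Lemma~\ref{lem:rank basic1}(1), I may replace the basis of $\langle A_1, A_2, A_3\rangle$ so that $A_3$ is the given nonzero singular matrix; by Lemma~\ref{lem:rank basic1}(2), multiplication by nonsingular $P$ and $Q$ on the left and right then puts $A_3$ into the canonical form $\mathrm{diag}(I_r, 0_{n-r})$ with $r = \rank A_3$ and $1 \le r \le n-1$. The resulting $A_3$ is diagonal with $(n,n)$ entry zero, so it meets the shape hypothesis of Lemma~\ref{lem:use ab} with $m = n$, and the desired bound $\rank T \le 2n-1$ will follow as soon as $(A_1, A_2)$ is arranged to satisfy condition (a) or (b) of Lemma~\ref{lem:as lemma3}.

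I first try to achieve condition (a), i.e.\ $(A_1)_{nn} \neq 0$ (after possibly swapping $A_1, A_2$). The remaining freedom $A_j \mapsto P A_j Q$ with $PA_3 Q = A_3$ includes conjugation by $\mathrm{diag}(I_r, P_{22})$, which acts by similarity on the lower-right $(n-r)\times(n-r)$ block of $A_j$. Whenever that block of $A_1$ or $A_2$ is nonzero, a standard similarity step makes its $(n-r,n-r)$ entry---i.e.\ the $(n,n)$ entry of the full matrix---nonzero, and (a) holds. Otherwise the lower-right blocks of both $A_1$ and $A_2$ vanish, and I turn to condition (b) via Lemma~\ref{lem:prep as lemma3}, choosing the conjugating matrix $\mathrm{diag}(P,1)$ with $P$ block-diagonal $\mathrm{diag}(P_1, P_2)$ so that $A_3$ is preserved.

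The principal obstacle is the degenerate cases in which the hypotheses of Lemma~\ref{lem:prep as lemma3} fail. After appropriate swaps, there are two residual situations: (i) both $A_1$ and $A_2$ have zero last column (or, symmetrically, zero last row)---since $A_3$'s last column is already zero when $r \le n-1$, all three slices of $T$ share that zero column and $T$ embeds into an $n\times(n-1)\times 3$ (resp.\ $(n-1)\times n\times 3$) tensor, so Theorem~\ref{thm:maxrankAK1} with $p=3$ gives $\rank T \le (n-1)+n = 2n-1$; or (ii) some $t \in \FFF$ makes $tA_1 + A_2$ have both zero last row and zero last column. In case (ii) I replace $A_2$ by $tA_1 + A_2$ so it becomes supported on the top-left $(n-1)\times(n-1)$ block, split off the bordering row and column of $A_1$ as a rank-at-most-$2$ tensor, and handle the remaining top-left $(n-1)\times(n-1)\times 3$ subtensor---whose third slice is $I_{n-1}$---inductively on $n$, using that the replaced $A_2$ itself supplies a nonzero singular matrix in the sub-span. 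Closing case (ii) cleanly, particularly over $\RRR$ where the inductive hypothesis may not descend in every configuration, is the main technical hurdle I anticipate.
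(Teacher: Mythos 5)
Your overall strategy---normalize $A_3$ to $\diag(E_r,O)$ and feed $(A_1,A_2)$ into Lemma~\ref{lem:use ab} via condition (a) or (b) of Lemma~\ref{lem:as lemma3}---is the paper's strategy, and your condition-(a) step matches its first case. But two of your later steps have genuine gaps. First, when the lower-right $(n-r)\times(n-r)$ blocks of $A_1$ and $A_2$ vanish and $r<n-1$, condition (b) can never be realized at the cells $(n-1,n)$ and $(n,n-1)$ by the conjugations you allow: those cells lie inside the lower-right block, which is zero and stays zero under conjugation by $\diag(P_1,P_2,1)$. Nor does Lemma~\ref{lem:prep as lemma3} permit you to prescribe a block-diagonal $P$; its proof needs a generic $P$ (every entry of $P\aaa_1$ and every $2$-minor of $P(\aaa_1,\aaa_2)$ nonzero), and a block-diagonal $P$ cannot create a nonzero coordinate where $\aaa_1$ vanishes on the whole block --- which it does here, since the entries $(r+1,n),\dots,(n-1,n)$ of $A_1$ are all zero. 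The paper's way around this is to apply Lemma~\ref{lem:prep as lemma3} to the $(r+1)\times(r+1)$ leading principal submatrices, after first moving a column $j>r$ with $\aaa_{1j},\aaa_{2j}$ linearly independent into position $r+1$; the conjugating matrix is then $\diag(P,E_{n-r})$ with $P$ of size $r$, which does preserve $A_3$, and condition (b) is produced at $(r,r+1),(r+1,r)$ and transported to $(n-1,n),(n,n-1)$ by the simultaneous swaps $r\leftrightarrow n-1$ and $r+1\leftrightarrow n$, which keep $A_3$ diagonal with zero $(n,n)$ entry. Some version of this detour is unavoidable.

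Second, the induction in your case (ii) does not close, as you yourself anticipate: after replacing $A_2$ by $tA_1+A_2$ and stripping the border of $A_1$ as a rank-$2$ piece, the span of the three $(n-1)\times(n-1)$ corner blocks need not contain a nonzero singular matrix over $\RRR$ (for instance the corner of $A_3$ is $E_{n-1}$ when $r=n-1$, and the corner of $tA_1+A_2$ may well be nonsingular), so the inductive hypothesis is unavailable and the bound $2(n-1)-1$ cannot be asserted. The paper's residual case is organized differently precisely to avoid this: assuming $\supp(A_1)\supset\supp(A_2)$ from the outset, it isolates the case in which every pair $\aaa_{1j},\aaa_{2j}$ and every pair $\bbb_{1j},\bbb_{2j}$ with $j>r$ is linearly dependent, then uses a lower triangular unipotent column operation on the first $r+1$ columns to create a zero column inside the leading $r\times r$ part of $A_1$. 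The tensor then splits into an $r\times r\times 3$ tensor whose first slice is singular by construction (so the induction hypothesis applies and gives $2r-1$), plus $n-r$ column triples and $n-r$ row triples each of rank at most $1$ by the dependence assumption, for a total of $2r-1+2(n-r)=2n-1$. Your rank-$2$ border-stripping is both too lossy and unsupported by induction; you would need to adopt something like the paper's dependence dichotomy and its descent to size $r$ rather than $n-1$.
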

\begin{proof}
We prove by induction on $n$.

Since
$\maxrank_\FFF(1,1,3)=1$ and
$\maxrank_\FFF(2,2,3)=3$,
we may assume that $n\geq 3$.
By 
Lemma \ref{lem:rank basic1} and the assumption,
we may assume that $A_3=\diag(E_r,O)$ with $r<n$
and $\supp(A_1)\supset\supp(A_2)$.

If $(i,j)\in\supp(A_1)$ for some $(i,j)$ with $i>r$ and $j>r$,
by 
permuting
rows and columns within $(r+1)$-th, \ldots, $n$-th one, if necessary,
we can apply Lemma \ref{lem:use ab}.
Therefore $\rank_\FFF T\leq 2n-1$.

Now assume that
$(i,j)\not\in\supp(A_1)$ for
any $i$, $j$ with $i>r$ and $j>r$.
Set
${}_{r<}(A_i)^{\leq r}=A_{12i}$ and
${}^{r<}(A_i)_{\leq r}=A_{21i}$.
If there is a column vector of $A_{121}$ which is $\zerovec$,
then
$\rank_\FFF T\leq n+n-1$
by Lemma \ref{lem:AS:lemma4},
since $T$ is essentially an $n \times (n-1) \times 3$ tensor in this case.
Therefore we may assume that no column vector of $A_{121}$ is $\zerovec$.
We may also assume that no row vector of $A_{211}$ is $\zerovec^T$.

Set $A_{12i}=(\aaa_{i, r+1},\ldots, \aaa_{in})$
and $A_{21i}^T=(\bbb_{i, r+1},\ldots, \bbb_{in})$.
Assume first that there is $j>r$ such that $\aaa_{1j}$, $\aaa_{2j}$ are
linearly independent.
Then by exchanging the $(r+1)$-th and the $j$-th columns,
we may assume that
$(A_1)_{\leq r+1}^{\leq r+1}$
and
$(A_2)_{\leq r+1}^{\leq r+1}$
satisfy the condition of Lemma \ref{lem:prep as lemma3}.
So we take the non-singular $r\times r$ matrix $P$ of the conclusion of 
Lemma \ref{lem:prep as lemma3}
and set
$
\diag(P,E_{n-r})A_k\diag(P,E_{n-r})^{-1}=(a_{ijk})$.
Then $a_{r+1,r+1,k}=0$ for any $k$ and
$a_{r,r+1,2}/a_{r,r+1,1}\neq a_{r+1,r,2}/a_{r+1,r,1}$.
Therefore, 
by exchanging the $(r+1)$-th and the $n$-th rows and columns, 
and exchanging the $r$-th and the $(n-1)$-th rows and columns, 
if necessary,
we may transform
$\diag(P,E_{n-r})(A_1;A_2;A_3)\diag(P,E_{n-r})^{-1}$ 
to a tensor which satisfy the condition of
Lemma \ref{lem:use ab}
(we do not need the permutation if $r=n-1$).
So the conclusion follows by Lemma \ref{lem:use ab}.
The case that there is $j>r$ such that $\bbb_{1j}$, $\bbb_{2j}$ 
are linearly independent
is proved by the same way.

Next assume that $\aaa_{1j}$, $\aaa_{2j}$ are linearly dependent
and
$\bbb_{1j}$, $\bbb_{2j}$ are linearly dependent
for any $j>r$.

Since the 
vector space spanned by the 
column vectors of 
$(A_1)^{\leq r}_{\leq r+1}$
is at most 
$r$
and the last column of 
$(A_1)^{\leq r}_{\leq r+1}$
 is not zero,
we see that there is $j$ with $1\leq j\leq r$ such that 
$j$-th column of $(A_1)^{\leq r}_{\leq r+1}$ is a linear combination of
the columns of ${}_{j<}(A_1)^{\leq r}_{\leq r+1}$.
Therefore
we see that there is an 
$(r+1)\times (r+1)$ 
lower triangular unipotent matrix $V$ such that
$((A_1)_{\leq r+1}V)_{\leq r}^{\leq r}=((A_1)^{\leq r}_{\leq r+1}V)_{\leq r}$ 
has a column vector which is $\zerovec$.
So by the induction hypothesis,

\begin{eqnarray*}
&&\rank_\FFF T\\
&=&\rank_\FFF(A_1;A_2;A_3)\\
&=&\rank_\FFF(A_1\diag(V,E_{n-r-1});A_2\diag(V,E_{n-r-1});A_3\diag(V,E_{n-r-1}))\\
&\leq & \rank_\FFF(((A_1)_{\leq r+1}V)_{\leq r}^{\leq r};
((A_2)_{\leq r+1}V)_{\leq r}^{\leq r};
((A_3)_{\leq r+1}V)_{\leq r}^{\leq r})\\
&&
+\sum_{j=r+1}^n\rank_\FFF(\aaa_{1j};\aaa_{2j};\zerovec)
+\sum_{j=r+1}^n\rank_\FFF((\bbb_{1j}^T,0)V;(\bbb_{2j},0)^TV;\zerovec^T)\\
&\leq &2r-1+(n-r)+(n-r)\\
&=&2n-1
\end{eqnarray*}
since
$\rank_\FFF(\aaa_{1j}$; $\aaa_{2j};\zerovec)\leq 1$ and 
$\rank_\FFF(\bbb_{1j}^T;\bbb_{2j}^T;\zerovec^T)\leq1$
for any $j$ with $j>r$.
\end{proof}

Next, we consider the non-square case.
First we prepare the following lemmas.
\begin{lemma}\label{lem:prep non-square}
Let $A$ and $B$ be $n\times n$ matrices,
$\aaa=(a_1, \ldots, a_n)^T$ and
$\bbb=(b_1, \ldots, b_n)^T$
be $n$-dimensional vectors.
Suppose $a_i\neq 0$ for any $i=1$, \ldots, $n$.
Then there are diagonal matrices $X$ and $Y$ 
and a vector $\ppp$ such that
\begin{enumerate}
\item
$A+X$ is non-singular,
\item
$(A+X)\ppp=\aaa$ and $(B+Y)\ppp=\bbb$.
\end{enumerate}
Moreover, if $b_1/a_1$, \ldots, $b_n/a_n$ are distinct each other,
then we can take $X$ and $Y$ so that 
$(A+X)^{-1}(B+Y)$ has $n$ distinct eigenvalues in $\FFF$.
\end{lemma}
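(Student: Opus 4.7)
The plan is to parametrize the construction by a single scalar $t$ and then use a limiting argument analogous to the proof of Lemma~\ref{lem:as lemma3}. Since $a_i\neq 0$ for every $i$, for any $t\neq 0$ the vector $\ppp:=\aaa/t$ has only nonzero entries. Given such a $\ppp$, the equations $(A+X)\ppp=\aaa$ and $(B+Y)\ppp=\bbb$ force the diagonal entries of $X$ and $Y$ uniquely. A direct check, using that $\diag((A\aaa)_i/a_i)\,\aaa=A\aaa$ and similarly for $B$, shows these forced values are
\[
X=tE_n-D,\qquad Y=tG-F,
\]
where $D:=\diag((A\aaa)_i/a_i)$, $G:=\diag(b_i/a_i)$, and $F:=\diag((B\aaa)_i/a_i)$.

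For the first conclusion, $A+X=(A-D)+tE_n$, so $\det(A+X)$ is a monic polynomial in $t$ of degree $n$ and hence vanishes for at most $n$ values of $t$. Any $t\neq 0$ outside this finite exceptional set then produces $\ppp$, $X$, $Y$ satisfying conditions (1) and (2).

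For the ``moreover'' clause, assume $b_1/a_1,\ldots,b_n/a_n$ are pairwise distinct. The eigenvalues of $(A+X)^{-1}(B+Y)$ are the roots of
\[
\det\bigl(\lambda(A+X)-(B+Y)\bigr)=\det\bigl(t(\lambda E_n-G)+L(\lambda)\bigr),
\]
where $L(\lambda):=\lambda(A-D)-(B-F)$ is a matrix polynomial of degree one in $\lambda$ that does not depend on $t$. After dividing by $t^n$, the resulting polynomial in $\lambda$ equals $\det((\lambda E_n-G)+t^{-1}L(\lambda))$, whose coefficients converge as $t\to\infty$ to those of $\det(\lambda E_n-G)=\prod_i(\lambda-b_i/a_i)$, a polynomial that has $n$ distinct roots in $\FFF$ by hypothesis. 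The delicate step is ensuring the perturbed roots remain in $\FFF$ when $\FFF=\RRR$, but this is exactly what Lemma~\ref{lem:distinct root} delivers: for all sufficiently large $|t|$ the original polynomial in $\lambda$ has $n$ distinct roots in $\FFF$, which are precisely the eigenvalues of $(A+X)^{-1}(B+Y)$. Choosing $t$ large enough for this and outside the finite zero set of $\det(A+X)$ completes the construction.
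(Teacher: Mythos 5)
Your proof is correct and is essentially the same argument as the paper's: both reduce to the limiting pair $(\diag(a_i),\diag(b_i))$, whose quotient is $\diag(b_i/a_i)$ with the $n$ distinct eigenvalues $b_i/a_i$, and then invoke Lemma~\ref{lem:distinct root} to keep $n$ distinct roots in $\FFF$ for a large (resp.\ small) value of the scalar parameter. The only difference is cosmetic — you take $\ppp=\aaa/t$ with $t\to\infty$ where the paper takes $\ppp=\epsilon\onevec$ with $\epsilon\to0^+$ — and the one tiny point worth making explicit is that before applying Lemma~\ref{lem:distinct root} you should normalize $\det\bigl((\lambda E_n-G)+t^{-1}L(\lambda)\bigr)$ by its leading coefficient $\det\bigl(E_n+t^{-1}(A-D)\bigr)$ (nonzero and tending to $1$) so that the polynomial is monic as the lemma requires.
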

\begin{proof}
Set $A=(a_{ij})$ and $B=(b_{ij})$.
For $0<\epsilon\in\RRR$, we set
\begin{eqnarray*}
a_i(\epsilon)&=&a_i-\epsilon\sum_{j=1}^n a_{ij}\\
b_i(\epsilon)&=&b_i-\epsilon\sum_{j=1}^n b_{ij}\\
D_1(\epsilon)&=&\diag(a_1(\epsilon),\ldots,a_n(\epsilon))\\
D_2(\epsilon)&=&\diag(b_1(\epsilon),\ldots,b_n(\epsilon)).
\end{eqnarray*}
Then 
$$
(\epsilon A+D_1(\epsilon))\onevec=\aaa
\quad\mbox{and}\quad
(\epsilon B+D_2(\epsilon))\onevec=\bbb
$$
where $\onevec=(1,\ldots, 1)^T$.

By the same argument as in the proof of
Lemma \ref{lem:as lemma3},
we see that $\epsilon A+D_1(\epsilon)$ is non-singular if $\epsilon>0$ 
is sufficiently small and
if $b_1/a_1$, \ldots, $b_n/a_n$ are distinct each other,
we can take $\epsilon$ so that 
$(\epsilon A+D_1(\epsilon))^{-1}(\epsilon B+D_2(\epsilon))$ 
has $n$ distinct eigenvalues in $\FFF$.

Therefore, it is enough to set
$X=(1/\epsilon)D_1(\epsilon)$,
$Y=(1/\epsilon)D_2(\epsilon)$
and
$\ppp=\epsilon\onevec$.
\end{proof}
\begin{lemma}\label{lem:use non-square prep}
Let $(A_1;A_2)$ be an $m\times n\times 2$ tensor with $m<n$.
Set $A_i=(\aaa_{i1},\ldots, \aaa_{in})$ for $i=1,2$.
Suppose $(A_1)_{\leq m}$ is non-singular and
$((A_1)_{\leq m})^{-1}(A_2)_{\leq m}$ has $m$ distinct eigenvalues.
Suppose also that there are integers $j_1$, \ldots, $j_s$ with
$m<j_1<\cdots<j_s\leq n$ and
$m$-dimensional vectors $\ppp_1$, \ldots, $\ppp_s$ such that
$$
(A_i)_{\leq m}\ppp_t=\aaa_{ij_t}\quad
\mbox{for $i=1,2$, $t=1,2,\ldots, s$.}
$$
Then $\rank_\FFF(A_1;A_2)\leq n-s$.
\end{lemma}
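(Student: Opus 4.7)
The plan is to use the shared coefficient vectors $\ppp_1, \ldots, \ppp_s$ to perform column operations (via right-multiplication by a single nonsingular matrix $Q$) that simultaneously zero out the $j_t$-th column in both layers $A_1$ and $A_2$; after deleting those zero columns the tensor has width $n-s$ and Lemma \ref{lem:AS:lemma4} applies directly to give the stated bound.

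Concretely, I would take the $n \times n$ matrix $Q$ that agrees with $E_n$ in every column except the columns indexed by $j_1, \ldots, j_s$. In the $j_t$-th column of $Q$, place $-\ppp_t$ in the first $m$ entries, a $1$ in the $(j_t, j_t)$ cell, and zeros in all other entries. Since $Q_{k j_t}$ is nonzero only for $k \leq m < j_t$, the matrix $Q$ is upper triangular with $1$'s on the diagonal, so $\det Q = 1$ and $Q$ is nonsingular. For each $i \in \{1,2\}$ and each $t \in \{1,\ldots,s\}$, the $j_t$-th column of $A_i Q$ equals $\aaa_{ij_t} - (A_i)_{\leq m}\ppp_t = \zerovec$ by the hypothesis, while the other $n-s$ columns of $A_i$ are unchanged. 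By Lemma \ref{lem:rank basic1}(2), $\rank_\FFF(A_1; A_2) = \rank_\FFF(A_1 Q; A_2 Q)$.

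Since a column that is simultaneously zero in every layer can be discarded without changing the tensor rank (a rank-one cover of the truncated tensor extends by zero to a rank-one cover of the original, and vice versa), I can remove the columns indexed by $j_1, \ldots, j_s$ to obtain an $m \times (n-s) \times 2$ tensor $(B_1; B_2)$ whose first $m$ columns are still $(A_1)_{\leq m}$ and $(A_2)_{\leq m}$. Because the nonsingularity of $(A_1)_{\leq m}$ and the distinctness of the eigenvalues of $((A_1)_{\leq m})^{-1}(A_2)_{\leq m}$ are inherited unchanged, and $s \leq n - m$ (so that $n - s \geq m$), Lemma \ref{lem:AS:lemma4} applies to $(B_1; B_2)$ with its internal $n$ equal to $m$ and its internal $m$ equal to $n-s$, yielding $\rank_\FFF(B_1; B_2) \leq n - s$.

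The key observation carrying the whole argument is that the same vector $\ppp_t$ works for both $i = 1$ and $i = 2$; this is exactly what permits a single elementary column operation to clear the $j_t$-th column in both layers at once. I do not expect a real obstacle beyond recording that $Q$ is nonsingular and that deleting columns which are identically zero preserves tensor rank, both of which are elementary.
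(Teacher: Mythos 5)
Your construction of $Q$ is exactly the matrix $V$ used in the paper's own proof (the upper triangular unipotent matrix with $-\ppp_t$ placed above the $1$ in column $j_t$), and the rest of the argument — zeroing the columns $j_1,\ldots,j_s$ in both layers simultaneously, discarding them, and applying Lemma \ref{lem:AS:lemma4} — coincides with the paper's. The proposal is correct and takes essentially the same approach.
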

\begin{proof}
Let $V$ be the $n\times n$ upper triangular unipotent matrix whose $j$-th column
is
$
\begin{pmatrix}-\ppp_t\\\zerovec\end{pmatrix}+\eee_{j_t}
$
if $j=j_t$ for some $t$
and $\eee_j$ otherwise.

Then $j_1$, $j_2$, \ldots, $j_s$-th column of $A_iV$ is zero by the assumption
and therefore we see by Lemma \ref{lem:AS:lemma4} that
$$
\rank_\FFF(A_1;A_2)=\rank_\FFF(A_1V;A_2V)\leq n-s,
$$
since  $(A_1V;A_2V)$ is essentially an $m\times (n-s)\times 2$ tensor.
\end{proof}
Now we state the following
\begin{thm}\label{thm:non-square}
If $m<n$ then
$\maxrank_\FFF(m,n,3)\leq m+n-1$.
\end{thm}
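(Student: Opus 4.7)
The strategy is to combine Lemmas~\ref{lem:prep non-square} and~\ref{lem:use non-square prep}: adjust the first $m$ columns of $A_1,A_2$ by diagonal matrices, then peel off one extra column using the resulting vector $\ppp$. First I would apply Lemma~\ref{lem:rank basic1} to put $A_3$ in canonical form: assuming provisionally that $\langle A_1,A_2,A_3\rangle$ contains a matrix of rank $m$, replace $A_3$ by such a matrix and transform via $PA_iQ$ to achieve $A_3=(E_m,O)$, where $E_m$ is the $m\times m$ identity and $O$ is the $m\times(n-m)$ zero block. Using the residual freedom in transformations that preserve this form of $A_3$, arrange that the $(m+1)$-th column $\aaa$ of $A_1$ has all nonzero entries and that the componentwise ratios $b_i/a_i$ (where $\bbb$ is the $(m+1)$-th column of $A_2$) are pairwise distinct.

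Applying Lemma~\ref{lem:prep non-square} to the square blocks $(A_1)_{\leq m},(A_2)_{\leq m}$ together with $\aaa,\bbb$ yields diagonal $m\times m$ matrices $X,Y$ and a vector $\ppp\in\FFF^m$ with $(A_1)_{\leq m}+X$ non-singular, $((A_1)_{\leq m}+X)\ppp=\aaa$, $((A_2)_{\leq m}+Y)\ppp=\bbb$, and $((A_1)_{\leq m}+X)^{-1}((A_2)_{\leq m}+Y)$ having $m$ distinct eigenvalues in $\FFF$. Setting $X'=(X,O),\,Y'=(Y,O)\in\FFF^{m\times n}$, I would split
\[
T=(A_1+X';\,A_2+Y';\,O)+(-X';\,-Y';\,A_3).
\]
The first summand has zero third slice and is essentially an $m\times n\times 2$ tensor; its first-$m$-columns block and its $(m+1)$-th column pair meet the hypotheses of Lemma~\ref{lem:use non-square prep} exactly (with $s=1$, $j_1=m+1$, $\ppp_1=\ppp$), so its rank is at most $n-1$. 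The second summand vanishes outside the first $m$ columns and reduces to $(-X;-Y;E_m)$, an $m\times m\times 3$ tensor whose three slices are diagonal; it decomposes as $\sum_{i=1}^m \eee_i\otimes\eee_i\otimes(-x_i,-y_i,1)^T$ and hence has rank at most $m$. Summing gives $\rank_\FFF T\leq (n-1)+m=m+n-1$.

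The main obstacle is justifying the normalization step. The genericity of $\aaa$ and of the ratios $b_i/a_i$ can be arranged from the residual $(P,Q)$-freedom by a Lemma~\ref{lem:make nonzero}-style argument as long as at least one of ${}_{m<}A_1,{}_{m<}A_2$ is nonzero; if both vanish then the tensor is genuinely $m\times m\times 3$ and Theorem~\ref{thm:maxrankAK1} already yields $\rank\leq 2m\leq m+n-1$. The sub-case where $\langle A_1,A_2,A_3\rangle$ contains no rank-$m$ matrix demands separate care: a common left (resp.\ right) null vector of the triple reduces the problem to $(m-1)\times n\times 3$ (resp.\ $m\times(n-1)\times 3$) and induction closes the bound, leaving only the delicate situation of no common kernel combined with no rank-$m$ element in the span, which is the hardest piece to pin down cleanly.
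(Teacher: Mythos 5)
Your main construction---normalizing $A_3=(E_m,O)$, producing $X,Y,\ppp$ via Lemma~\ref{lem:prep non-square}, splitting $T$ into a two-slice tensor of rank at most $n-1$ (Lemma~\ref{lem:use non-square prep} with $s=1$) plus a diagonal $m\times m\times 3$ remainder of rank at most $m$---is exactly the paper's argument for the sub-case where the span contains a rank-$m$ matrix and some trailing column pair is independent. But the proposal has two genuine gaps.

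First, the normalization you lean on fails more often than you allow. To use the ``moreover'' clause of Lemma~\ref{lem:prep non-square} you need the ratios $b_i/a_i$ pairwise distinct, i.e.\ all $2$-minors of $(\aaa,\bbb)$ nonzero. The residual freedom preserving $A_3=(E_m,O)$ is left multiplication by $P$ together with mixing the last $n-m$ columns among themselves, and no such move can make $(\aaa,\bbb)$ have rank $2$ if, say, ${}_{m<}A_2=O$ while ${}_{m<}A_1\neq O$: then every ratio is $0$ for every admissible choice, yet your stated fallback (``both trailing blocks vanish'') does not apply. What is actually needed is some $j>m$ with $\aaa_{1j},\aaa_{2j}$ linearly independent; when every such pair is dependent the paper runs a different argument, using a unipotent column operation on the first $m+1$ columns to kill a column of $A_1$, bounding the resulting $m\times m\times 3$ piece by $2m-1$ via Theorem~\ref{thm:cont sing}, and charging each remaining column rank $1$, for a total of $2m-1+(n-m)=m+n-1$. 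Your proposal has no substitute for this step.

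Second, you explicitly leave open the case where $\langle A_1,A_2,A_3\rangle$ contains no rank-$m$ matrix and there is no common left or right null vector, and this case is not vacuous: with $r=\max\{\rank A\mid A\in\langle A_1,A_2,A_3\rangle\}<m$ one only gets $(i,j)\notin\supp(A_1)$ for $i,j>r$, which does not force a common kernel. The paper closes it by a genuinely different mechanism: after moving an independent column pair into position $r+1$, Lemma~\ref{lem:prep as lemma3} conjugates $(A_1)^{\leq r+1}_{\leq r+1}$ and $(A_2)^{\leq r+1}_{\leq r+1}$ into condition~(b) of Lemma~\ref{lem:as lemma3}, and Lemma~\ref{lem:use ab} then gives $m+n-1$. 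Without an argument here the proof is incomplete precisely where you say it is hardest.
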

\begin{proof}
We prove for an arbitrary $m\times n\times 3$ tensor 
$T=(A_1;A_2;A_3)$,
$\rank_\FFF T\leq m+n-1$.

Set $r=\max\{\rank A\mid A\in\langle A_1,A_2,A_3\rangle\}$.
Then by Lemma \ref{lem:rank basic1},
we may assume that $A_3=(\diag(E_r,O),O)$
and 
$\supp(A_1)\supset \supp(A_2)$.

Set
$A_i=(\aaa_{i1},\ldots, \aaa_{in})$ for $i=1,2$.
If there is $j>m$ such that $\aaa_{1j}=\zerovec$,
then, since we are assuming that $\supp(A_1)\supset\supp(A_2)$,
$T$ is essentially an $m\times (n-1)\times 3$ tensor.
So 
$\rank_\FFF T\leq m+n-1$
by Lemma \ref{lem:AS:lemma4}.

Now assume that $\aaa_{1j}\neq\zerovec$ for any $j>m$.

We first consider the case where $\aaa_{1j}$, $\aaa_{2j}$ are linearly dependent for any
$j$ with $j>m$.
Since the vector space spanned by the column vectors of $(A_1)_{\leq m+1}$ is at most $m$
and the last column of $(A_1)_{\leq m+1}$ is not zero,
we see that there is $j$ with $1\leq j\leq m$ such that $j$-th column vector of $A_1$
is a linear combination of the column vectors of 
${}_{j<}(A_1)_{\leq m+1}$.
Therefore we see that there is an $(m+1)\times (m+1)$ lower triangular unipotent matrix $V$
such that
$(((A_1)_{\leq m+1})V)_{\leq m}$ has a column vector which is $\zerovec$.
So we see by Theorem \ref{thm:cont sing}
\begin{eqnarray*}
&&\rank_\FFF T\\
&=&\rank_\FFF(
A_1\diag(V,E_{n-m-1});
A_2\diag(V,E_{n-m-1});
A_3\diag(V,E_{n-m-1}))\\
&\leq&\rank_\FFF(
((A_1)_{\leq m+1}V)_{\leq m};
((A_2)_{\leq m+1}V)_{\leq m};
((A_3)_{\leq m+1}V)_{\leq m})\\
&&\quad
+\sum_{j=m+1}^n\rank_\FFF(\aaa_{1j};\aaa_{2j};\zerovec)\\
&\leq& 2m-1+n-m\\
&=&m+n-1,
\end{eqnarray*}
since $\aaa_{1j}$, $\aaa_{2j}$ are linearly dependent for $j>m$.

From now on, we assume that there is $j$ with $j>m$ such that
$\aaa_{1j}$, $\aaa_{2j}$ are linearly independent.

We first consider the case where $r=m$.
By Lemma \ref{lem:make nonzero},
we see that there is a non-singular $m\times m$ matrix $P$ such that
any entry of $P\aaa_{1j}$ and any $2$-minor of $P(\aaa_{1j},\aaa_{2j})$ is not zero.
Set $B_i=PA_i\diag(P,E_{n-m})^{-1}$ and
$B_i=(\bbb_{i1},\ldots, \bbb_{in})$  for $i=1,2,3$.
Then $B_3=(E_m,O)$ and every entry of $\bbb_{1j}$ and every $2$-minor of $(\bbb_{1j},\bbb_{2j})$
is not zero.
So by Lemma \ref{lem:prep non-square}, we see that there are $m\times m$ diagonal matrices
$D_1$ and $D_2$ and an $m$-dimensional vector $\ppp$ such that
\begin{description}
\item[\quad] $((B_i)_{\leq m}+D_i)\ppp=\bbb_{ij}\quad\mbox{for $i=1,2$,}$
\item[\quad] $(B_1)_{\leq m}+D_1$ is non-singular and
\item[\quad] $((B_1)_{\leq m}+D_1)^{-1}((B_2)_{\leq m}+D_2)$ has $m$ distinct eigenvalues.
\end{description}
Therefore by Lemma \ref{lem:use non-square prep}, we see that
$$
\rank_\FFF(B_1+(D_1,O);B_2+(D_2,O))\leq n-1.
$$
So
\begin{eqnarray*}
&&\rank_\FFF T\\
&= &\rank_\FFF(B_1;B_2;B_3)\\
&\leq&\rank_\FFF(B_1+(D_1,O);B_2+(D_2,O))+\rank_\FFF(-(D_1,O);-(D_2,O);(E_m,O))\\
&\leq& n-1+m.
\end{eqnarray*}

Finally we consider the case where $r<m$.
Since $A_3=(\diag(E_r,O),O)$ and 
$\rank(tA_3+A_1)\leq r$ for any $t\in\FFF$ by the definition of $r$,
we see that $(i,j)\not\in\supp(A_1)$ if $i>r$ and $j>r$.

If the $(r+1)$-th row of $A_1$ is zero,
then $(A_1;A_2;A_3)$ is essentially an $(m-1)\times n\times 3$ tensor.
So
$$\rank_\FFF(A_1;A_2;A_3)\leq m-1+n
$$
by Lemma \ref{lem:AS:lemma4}.
Therefore we may assume
that $(r+1)$-th row of $A_1$ is not zero.
Take $j$ with $j>m$ such that $\aaa_{1j}$, $\aaa_{2j}$ are linearly independent.
Exchanging the $(r+1)$-th and the $j$-th columns of $A_i$,
we may assume that $\aaa_{1,r+1}$, $\aaa_{2,r+1}$ are linearly independent.
By applying Lemma \ref{lem:prep as lemma3} to 
$(A_1)_{\leq r+1}^{\leq r+1}$
and
$(A_2)_{\leq r+1}^{\leq r+1}$,
we see that there is a non-singular $r\times r$ matrix $P$ such that
$\diag(P,1)(A_1)_{\leq r+1}^{\leq r+1}\diag(P,1)^{-1}$ 
and
$\diag(P,1)(A_2)_{\leq r+1}^{\leq r+1}\diag(P,1)^{-1}$
satisfy the condition of (b) in Lemma \ref{lem:as lemma3}.
Set
$
B_i=\diag(P,E_{m-r})A_i\diag(P,E_{n-r})^{-1}$ 
for $i=1,2,3$.
Then
$B_3=(\diag(E_r,O),O)$ and,
$(B_1)_{\leq r+1}^{\leq r+1}$
and
$(B_2)_{\leq r+1}^{\leq r+1}$,
satisfy the condition (b) in Lemma \ref{lem:as lemma3}.

Let $C_i$ be the $m\times n$ matrix obtained by exchanging the 
$(r+1)$-th and $m$-th rows and columns 
and $r$-th and $(m-1)$-th rows and columns
of $B_i$ respectively
for $i=1,2,3$.
Then 
$(C_1)_{\leq m}$ and 
$(C_2)_{\leq m}$ 
satisfy the condition of (b) in Lemma \ref{lem:as lemma3}
and
$C_3=(\diag(E_{r-1},O,1,0),O)$.
Therefore
we see that
$$
\rank_\FFF T=\rank_\FFF(C_1;C_2;C_3)\leq m+n-1
$$
by Lemma \ref{lem:use ab}.
\end{proof}
%
%
%

Finally
we state some upper bounds of the maximal rank 
for small tensors
which are direct consequences of Theorem \ref{thm:cont sing}.

\begin{prop} \label{thm:smallsize}
The followings are true.
\begin{enumerate}
\item $\maxrank_{\FFF}(3,3,3)\leq 5$
\item $\maxrank_{\CCC}(4,4,3)\leq 7$
\item $\maxrank_{\FFF}(5,5,3)\leq 9$
\item $\maxrank_{\CCC}(6,6,3)\leq 11$
\end{enumerate}
\end{prop}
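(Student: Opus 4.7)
The plan is to invoke Theorem \ref{thm:cont sing} directly for each of the four items, since that theorem's ``in particular'' clause already gives the bound $\maxrank_\FFF(n,n,3) \leq 2n-1$ whenever $\FFF = \CCC$ or $n$ is odd. The proposition is obtained simply by instantiating this conclusion for $(n,\FFF)$ equal to $(3,\FFF)$, $(4,\CCC)$, $(5,\FFF)$, and $(6,\CCC)$.

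Concretely, for (1), $n=3$ is odd, so $\maxrank_\FFF(3,3,3) \leq 2\cdot 3 - 1 = 5$ holds over both $\RRR$ and $\CCC$. For (2), the hypothesis $\FFF = \CCC$ is met with $n=4$, giving $\maxrank_\CCC(4,4,3) \leq 2\cdot 4 - 1 = 7$. For (3), $n=5$ is odd, so $\maxrank_\FFF(5,5,3) \leq 9$. For (4), $\FFF = \CCC$ with $n=6$ yields $\maxrank_\CCC(6,6,3) \leq 11$. In each case nothing more is required than matching the numerology to the theorem.

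There is essentially no obstacle beyond checking the parity or field hypothesis; the substantive content has been fully absorbed into Theorem \ref{thm:cont sing}, which both produces a non-zero singular element in $\langle A_1, A_2, A_3\rangle$ (via the intermediate value theorem for odd-degree real polynomials, or the fundamental theorem of algebra in the complex case) and derives the rank bound $2n-1$ once such an element is found. The reason the statement treats the even cases $n=4$ and $n=6$ only over $\CCC$ is precisely that the analogous real statements would need a different mechanism to produce a singular matrix in the span, which Theorem \ref{thm:cont sing} does not supply.
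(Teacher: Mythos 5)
Your proposal is correct and is exactly the paper's approach: the paper offers no separate argument, stating only that these bounds are direct consequences of Theorem \ref{thm:cont sing}, whose ``in particular'' clause you instantiate at $(n,\FFF)=(3,\FFF),(4,\CCC),(5,\FFF),(6,\CCC)$. Your remarks on how the singular matrix arises (odd degree over $\RRR$, algebraic closedness over $\CCC$) and on why the even cases are stated only over $\CCC$ match the paper's intent, including its closing example of a real $4\times4$ span with no non-zero singular matrix.
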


It is possible that there is no non-zero singular matrix in $\langle A_1,A_2,A_3\rangle$ over the real number field.
For example, let
$A_1=\begin{pmatrix} 0&1&0&0\cr -1&0&0&0\cr 0&0&0&1\cr 0&0&-1&0\end{pmatrix}$,
$A_2=\begin{pmatrix} 0&0&0&1\cr 0&0&1&0\cr 0&-1&0&0\cr -1&0&0&0\end{pmatrix}$
and $A_3=E_4$.  
Since the determinant of $xA_1+yA_2+zA_3$ is
$(x^2+y^2+z^2)^2$, $xA_1+yA_2+zA_3$ is singular only when $x=y=z=0$.




\end{document}